\def\AA{{\mathbb{A}}}
\def\RR{{\mathbb{R}}}
\def\CC{{\mathbb{C}}}
\def\QQ{{\mathbb{Q}}}
\def\NN{{\mathbb{N}}}
\def\ZZ{{\mathbb{Z}}}
\let \cedilla =\c
\renewcommand{\b}{{\mathfrak{b}}}
\newcommand{\tb}{{\tilde{\b}}}
\renewcommand{\a}{{\mathfrak{a}}}
\newcommand{\p}{{\mathfrak{p}}}
\renewcommand{\wp}{{\widetilde{\p}}}
\newcommand{\oo}{{\mathcal O}}
\newcommand{\mld}{{\mathrm{mld}}}
\newcommand{\cont}{{\mathrm{Cont}}}
\newcommand{\codim}{{\mathrm{codim}}}
\newcommand{\ord}{{\mathrm{ord}}}
\newcommand{\mult}{{\mathrm{mult}}}
\newcommand{\spec}{{\mathrm{Spec \ }}}
\newcommand{\proj}{{\mathrm{Proj\ }}}
\newcommand{\wI}{{\widetilde{I}}}
\newcommand{\cha}{{\operatorname{char}}}
\newcommand{\m}{{\mathfrak{m}}}
\newcommand{\height}{{\mathrm{ht} }}
\newcommand{\lct}{{\operatorname{lct}}}
\renewcommand{\th}{{\tilde{h}}}
\renewcommand{\wr}{{\widetilde R}}
\newcommand{\tf}{{\tilde f}}
\newcommand{\we}{{\widetilde E}}
\newcommand{\wB}{{\widetilde{B}}}
\newcommand{\tg}{{\tilde g}}
\newcommand{\taa}{{\widetilde{\mathfrak a}}}
\newcommand{\ta}{{\widetilde{a}}}
\newcommand{\wa}{{\widetilde{A}}}
\newcommand{\ws}{{\widetilde{S}}}
\newcommand{\wsig}{{\widetilde{\Sigma}}}
\newcommand{\tP}{{\widetilde{P}}}
\newcommand{\wv}{{\widetilde{\varphi}}}
\newcommand{\tze}{{\tilde 0}}
 \newcommand{\wm}{{\widetilde\m}}
\newcommand{\zp}{{\ZZ/(p)}}
\renewcommand{\mod}{{\operatorname{mod}\ }}
\newcommand{\wC}{{\widetilde{C}}}
\renewcommand{\L}{{\mathcal{L}}}
\newcommand{\wL}{{\widetilde{\L}}}
\newtheorem{thm}{Theorem}[section]
\newtheorem{cor}[thm]{Corollary}
\newtheorem{Corollary-Definition}[thm]{Corollary-Definition}
\newtheorem{prop-def}[thm]{Proposition-Definition}
\newtheorem{prop}[thm]{Proposition}
\newtheorem{lem}[thm]{Lemma}
\theoremstyle{definition}
\newtheorem{defn}[thm]{Definition}
\newtheorem{ex}[thm]{Example}
\newtheorem{rem}[thm]{Remark}
\newtheorem{Proposition-Definition}[thm]{Proposition-Definition}
\begin{document}

\title[Liftings of ideals]{Liftings of ideals in positive characteristic to those in characteristic zero: \\
Surface case}

\thanks{Mathematical Subject Classification 2020: 14B05,14E18, 14J17\\
Key words: singularities in positive characteristic, jet schemes, minimal log discrepancy, log canonical threshold\\
The author is partially supported by Grant-In-Aid (c) 22K03428 of JSPS 
and  by the Research Institute for Mathematical Sciences, an International Joint Usage/Research Center located in Kyoto University.
 }

\author{Shihoko Ishii }

\begin{abstract}
In this paper, we introduce the notion of a characteristic-zero lifting of an object in positive characteristic by means of ``skeletons''.
 Using this notion, we relate invariants of singularities in positive characteristic to their counterparts in characteristic zero. As an application, we prove that the set of log discrepancies for pairs consisting of a smooth surface and a multi-ideal is discrete. 
 We also show that the set of minimal log discrepancies and the set of log canonical thresholds of such pairs in positive characteristic are contained in the corresponding sets in characteristic zero.
 Another application is the construction of Campillo's complex model of a plane curve in positive characteristic via the skeleton lifting method.
\end{abstract}

\maketitle
\section{Introduction}
\noindent
The aim of this paper is to compare the invariants of pairs consisting of smooth varieties and multi-ideals in positive characteristic with their counterparts in characteristic zero. To this end, we introduce a method of lifting by means of ``skeletons.''

Our focus is on invariants of singularities, specifically log discrepancies, minimal log discrepancies, and log canonical thresholds.

In this paper, we prove a statement relating the positive-characteristic setting to its characteristic-zero counterpart. In its present form, however, this statement applies only to pairs consisting of a smooth surface and a multi-ideal. In dimension three or higher, there is a counterexample due to Koll\'ar (\cite{ko}); accordingly, a modified statement will be given in a forthcoming paper.

The main theorem of this paper serves as a bridge between characteristic \(0\) and positive characteristic.


\begin{thm}\label{newmain}
Let $k$ be an algebraically closed field of characteristic $p>0$, and set
\[
A_0=\AA_k^N.
\]
Let 
\[
A_n \xrightarrow{\varphi_n} A_{n-1} \xrightarrow{\varphi_{n-1}} \cdots
\xrightarrow{\varphi_2} A_1 \xrightarrow{\varphi_1} A_0
\]
be a sequence of blow-ups at closed points $P_i\in A_{i-1}$ dominating the origin $0\in A_0$.

Then there exists a sequence of varieties over $\CC$,
\[
\wa_n \xrightarrow{\widetilde\varphi_n} \wa_{n-1}
\xrightarrow{\widetilde\varphi_{n-1}} \cdots
\xrightarrow{\widetilde\varphi_2} \wa_1
\xrightarrow{\widetilde\varphi_1} \wa_0:=\AA^N_{\CC},
\]
such that:

\begin{enumerate}
\item [(i)]  $\wa_i \pmod p = A_i$ for all $0\le i\le n$;
\item[(ii)] for each $1\le i\le n$, the morphism
\[
\widetilde\varphi_i:\wa_i\to \wa_{i-1}
\]
is the blow-up at a closed point $\tP_i\in \wa_{i-1}$ dominating the origin of $\wa_0$;
\item[(iii)] if $E_i\subset A_i$ and $\we_i\subset \wa_i$ denote the
exceptional divisors of $\varphi_i$ and $\widetilde\varphi_i$, respectively, then
\[
k_{\we_i}=k_{E_i}
\qquad\text{for all }1\le i\le n;
\]
\item[(iv)] when $N=2$, for every $f\in k[x_1,x_2]$, there exists a lifting $\tf\in \CC[x_1,x_2]$
such that for every $i$ the following holds:
$$v_{E_i}(f)=v_{\we_i}(\tf).$$
\end{enumerate}
\end{thm}

\begin{cor}\label{main} 
Let  $k$ be an algebraically closed field of characteristic $p>0$.
    Let $E$ be a prime divisor over $A$ with the center at a smooth closed point $0$, and    $\a, \a_1,\ldots, \a_r\subset k[x_1,x_2]$ non-zero  ideals.
     Then, there exists  a prime divisor $\we$ over the affine plane $\wa=\AA_\CC^2$  over $\CC$ with the center
      at the origin $0$ satisfying the following:
  \begin{enumerate}
      \item[(i)]  $k_E=k_{\we}$;
      \item[(ii)]  there exists a lifting $\taa\subset \CC[x_1,x_2]$ of $\a$  satisfying
      $$v_E(\a)=v_{\we}(\taa);$$
     \item[(iii)] there exist liftings $\taa_i\subset \CC[x_1,x_2]$ of $\a_i$ $(i=1,\ldots,r)$ such that
     $$v_E(\a_i)=v_{\we}(\taa_i)$$
                and  for every multi-ideal
                 $\a^e:=\a_1^{e_1}\cdots\a_r^{e_r}$ $(e_i\in \RR_{>0})$ on $A$,
                 the lifting 
                 $\taa^e=\taa_1^{e_1}\cdots\taa_r^{e_r}$ of $\a^e$ on $\wa$
                 satisfies 
                 $$a(E;A,\a^e) =a(\we;\wa,\taa^e).$$ 
                    \end{enumerate}
\end{cor}

We obtain the following corollary as  applications of the Corollary \ref{main}.
 
   \begin{cor}[Discreteness of the set of log discrepancies]\label{equal-p-0}   Let  $k$ be an algebraically closed field of 
       characteristic $p>0$.
       Let $A$ be an algebraic surface over $k$ with a smooth closed point $0$. 
              Then, 
            for a fixed exponent $e\in \RR_{>0}^r$ and a real number $a>0$, the set of log discrepancies 
         $$\Lambda_{A,e,a}:=\left\{a(E;A,\a^e) \leq a \ \middle | \begin{array}{l}\ (A, \a^e)\ \mbox{is\ log\ canonical\ at $0$}, 
         \\ E:\ \mbox{\ any \ prime divisor\ over } (A,0) \end{array}\right\} $$
         is a finite set and does not depend on the choice of $(A,0)$.   
         In particular, for a  fixed  $e$, the set 
         $$\{\mld(0,A, \a^e) \mid \a^e\ \mbox{non-zero\ multi-ideal}\ \}$$ 
         is finite.
         Also for a fixed $\a^e$, the discreteness of log discrepancies gives another proof of  the existence of a prime divisor computing $\mld(0,A, \a^e)$.
    \end{cor}

\begin{cor}\label{mld-contain}
  The theorem also yields the following inclusions:
\[
\begin{aligned}
(1)\quad
&\Bigl\{
   \mld(0;A,\mathfrak a^e)
   \;\Bigm|\;
   \begin{array}[t]{@{}l@{}}
     (A,0)\text{ is a smooth point on a surface over }k,\\
     \mathfrak a^e\text{ is a multi-ideal}
   \end{array}
 \Bigr\} \\
&\qquad\subset
 \Bigl\{
   \mld(0;A_{\CC},\mathfrak b^e)
   \;\Bigm|\;
   \begin{array}[t]{@{}l@{}}
     (A_{\CC},0)\text{ is a smooth point on a surface over }\CC,\\
     \mathfrak b^e\text{ is a multi-ideal}
   \end{array}
 \Bigr\}, \\[1em]
(2)\quad
&\Bigl\{
   \lct(0;A,\mathfrak a^e)
   \;\Bigm|\;
   \begin{array}[t]{@{}l@{}}
     (A,0)\text{ is a smooth point on a surface over }k,\\
     \mathfrak a^e\text{ is a multi-ideal}
   \end{array}
 \Bigr\} \\
&\qquad\subset
 \Bigl\{
   \lct(0;A_{\CC},\mathfrak b^e)
   \;\Bigm|\;
   \begin{array}[t]{@{}l@{}}
     (A_{\CC},0)\text{ is a smooth point on a surface over }\CC,\\
     \mathfrak b^e\text{ is a multi-ideal}
   \end{array}
 \Bigr\}.
\end{aligned}
\]
\end{cor}
   
The stronger statements that the equalities in (1) and (2) hold were already proved in \cite{i2} using toric methods. Thus our theorem reproves these containments by a different method.

Campillo (\cite{cam}) proved that for every plane curve singularity $(C,0)$ in positive characteristic, there exists a complex model $(C_{\CC}, 0)$ whose equisingular type coincides with that of $(C,0)$ (see \cite{gr} for a survey).
The following is a corollary of Theorem \ref{newmain} showing that a complex model of a plane curve singularity in positive characteristic can be obtained via a skeleton lifting.

\begin{cor}\label{campillo}
       Let $C\subset \AA_k^2$ be a curve defined over an algebraically closed field $k$ of characteristic $p>0$.
      Then, there exists a lifting $\wC\subset \AA_\CC^2$ such that $(\wC, 0)$ gives  Campillo's  complex model of $C\subset \AA_k^2$.
       In other words, there are embedded resolutions 
       $$\varphi \colon A' \to \mathbb{A}^2_k\qquad \mbox{and}\qquad
       \widetilde{\varphi} \colon \widetilde{A}' \to \mathbb{A}^2_{\mathbb{C}}$$
        of $(C,\mathbb{A}^2_k)$ and
       $(\wC,\mathbb{A}^2_{\mathbb{C}})$, respectively, with the same
       multiplicity data of the reduced total transforms  at every stage of
      the successive point blow-ups (see, \cite[section 2.1]{gr}).

\end{cor}
  
    
 This paper is organized as follows. In Sections 2 and 3, we formulate the statements in arbitrary dimension for future use. In Section 2, we introduce the basic notions of liftings from positive characteristic to characteristic zero. In Section 3, we introduce log discrepancies, minimal log discrepancies, and log canonical thresholds, which are birational invariants measuring the nature of singularities of pairs. In Section 4, for a given sequence of blow-ups at closed points of an affine space of arbitrary dimension over a field of characteristic $p>0$,
 we construct a sequence of blow-ups of the complex affine space obtained by skeleton liftings and show that it satisfies property (i)--(iii) of Theorem~\ref{newmain}. We then prove (iv) of Theorem 1.1 in dimension two. In Section 5, we prove the corollaries of the main theorem.    
  \vskip.5truecm
   
  \noindent
  {\bf Acknowledgement.} The author would like to thank Lawrence Ein, Kazuhiko Kurano, Mircea Musta\c{t}\v{a}, Yuri Prokhorov,
  Kohsuke Shibata and Shoji Yokura for 
  useful discussions and comments.
  The author expresses her heartfelt thanks to J\'anos Koll\'ar for providing critical examples and comments for the preliminary versions of this paper.

\vskip.5truecm

{\bf Conventions} 
\begin{enumerate}
           \item Let $L_1$ be a ring, $L_2$  an $L_1$-algebra and $A$ a scheme of finite type over $L_1$.
       We denote the scheme $A\times_{\spec L_1}\spec L_2$ by $A\otimes_{L_1}L_2$ to avoid the 
      bulky expression.

    \item  For an integral domain $R$, we denote the field of fractions of $R$ by $Q(R)$.  
     \item A prime ideal $\p\subset R$ of a Noetherian integral domain is called a regular
     prime ideal if the local ring associated to $\p$ is a regular local ring.
     Here, note that if $R$ is a graded ring and $\p$ is a homogeneous prime ideal the 
     associated local ring is $R_{(\p)}$  by the symbol in \cite[Definition, p.116]{ha}.
 \item Let $\p\subset R$ be a regular prime ideal. 
           A set of elements $\{f_1,\ldots,f_c\}\subset \p$   is called a regular system of parameters (RSP for short)
           of $\p$ if it is an RSP in the local ring associated to $\p$.
           If $R$ is graded and $\p$ is homogeneous, the set gives an RSP in $R_{(\p)}$ by an appropriate dehomogenization.
           
            
       
\end{enumerate}

\vskip1truecm
\section{Basics on liftings}
\noindent
In this paper we construct a ``lifting" which is an  object on a variety over $\CC$ corresponding to that
 in positive characteristic.
Basic notions about liftings are discussed in   \cite{inv}, and here we  summarize some of them,
which we will use in this paper.

\begin{defn} Let $R$ be an integral domain which is finitely generated $k$-algebra,
  where $k$ is a field of characteristic $p\geq 0$.
  A subring $S\subset R$ is called a {\sl skeleton} of $R$, 
  if $S$ is finitely generated $\zp$-subalgebra of $R$ such that $k[S]=R$.
\end{defn}

\begin{defn}\label{def-of-lift-elem}

{\bf (compatible skeletons)}
   Let $k$ be a field of characteristic $p>0$ and $K$ a field of characteristic $0$.
   Let $R$ and $\wr$ be  finitely generated $k$-algebra and 
   $K$-algebra, respectively.
   Let $S\subset R$ and $\ws\subset \wr$ be skeletons.
   If the skeletons satisfy  
   \begin{center}
    $\ws$ is $\ZZ$-flat at $p$, and
   \end{center}
   $$S\simeq\ws\otimes_\ZZ \zp,$$  we call them {\sl compatible skeletons}.
   We  sometimes say that a skeleton $\ws$ is a {\sl lifting of the skeleton} $S$.
   We also call the canonical surjection $\Phi_p:\ws\to S$ {\sl compatible skeletons}.
   In this case we write $$\wr (\mod p)=R.$$ 
   If schemes $\wa$ and $A$  are defined by $\wr$ and $R$ ({\it  eg}. as $\spec$ or $\proj$)
   which have compatible skeletons, we also write 
   $$\wa (\mod p) =A.$$
  
  Here, we note that if $\wr$ is an integral domain (therefore, so is $\ws$), then $\ZZ$-flatness is automatic. 
  In this case $S\simeq\ws\otimes_\ZZ \zp$ is the only condition for $\ws$ and $S$ to be compatible skeletons.
 
   Let $\Phi_p:\ws\to S$ be compatible skeletons of $\wr$ and $R$.
   If $\tf\in \wr$ and $f\in R$ satisfy 
   $\tf\in\ws$ and $f=\Phi_p(\tf)$, then we write $\tf (\mod p)=f$ and call $\tf$ a {\sl  lifting} of $f$.
   If $\{\tf_1,\tf_2,\ldots, \tf_n \}\subset \ws$ and $f_i=\Phi_p(\tf_i)$ for all $i=1,\ldots, n$, then we 
   write $\{\tf_1,\tf_2,\ldots, \tf_n \} (\mod p)=\{f_1,\ldots, f_n\}$ and call $\{\tf_1,\tf_2,\ldots, \tf_n \} $
   a lifting of $\{f_1,\ldots, f_n\}$.
 
 \end{defn}

When we think of liftings of elements, we always use a common pair of  compatible skeletons in a continuous 
discussion.

\begin{ex}\label{example-liftings}
    The following are examples of  liftings.
    The first one is a trivial example and the second and third are more practical and will play basic roles
    in this paper.
    Assume that $k$ is a field of characteristic $p>0$.
    
\noindent
{\bf(1)} 
     We have $\CC(\mod p)= k$ with compatible skeletons $S=\zp$ and $\ws=\ZZ$.

\noindent    
{\bf(2)} 
      For any  finite elements $a_1,\ldots,a_r\in k$
      there exist  liftings $\ta_1,\ldots,\ta_r\in \CC$.

      Indeed compatible skeletons $\Sigma\subset k$ and $\widetilde\Sigma\subset \CC$ 
      giving these liftings are constructed  in \cite[Proposition 2.3]{inv}.
However, since this statement plays a fundamental role in the paper, we restate the proof here.
\begin{proof}
    Considering the subring $\zp[a_1,\ldots, a_n]\subset k$,
    we obtain canonical surjections:
    $$R:=\ZZ[Y_1,\ldots,Y_n]\stackrel{\psi}\longrightarrow S:=\zp[Y_1,\ldots,Y_n]
    \stackrel{\varphi}\longrightarrow \zp[a_1,\ldots,a_n]$$
    with $Y_i \mapsto a_i$.
    Let $P:=Ker \varphi \subset S$ and $Q:=Ker \varphi\circ\psi \subset R$,
    then these are prime ideals in  regular rings.
    Therefore $R_Q$ and $S_P$ are also regular local rings.
    Hence we obtain
    \begin{enumerate}
       \item $f_1,\ldots,f_c\ (\in P)$ which form a regular system of parameters of $S_P$,
       \item  $\tf_1,\ldots, \tf_c, p \  (\in Q)$ which form a regular system of parameters of $R_Q$ and
       \item  $\psi(\tf_i)=f_i $ for $i=1,\ldots, c$.
    \end{enumerate}
    Then, $R_Q/(\tf_1,\ldots,\tf_c)R_Q$ is also a regular local ring, in particular it is an
    integral domain.
    By considering the homomorphism $\ZZ\to R_Q $ and the regular sequence 
    $\{f_1,\ldots,f_c \}\subset S_P= R_Q\otimes_\ZZ\zp$,
    we obtain that $R_Q/(\tf_1,\ldots,\tf_c)R_Q$ is flat over $\ZZ$
    by \cite[Corollary, p.177]{mats}.
    In particular, the homomorphism $\ZZ \to R_Q/(\tf_1,\ldots,\tf_c)R_Q$ is injective,
    which implies the ring $R_Q/(\tf_1,\ldots,\tf_c)R_Q$ is of characteristic $0$.
 
    Note that the algebras $\wsig':=R_Q/(\tf_1,\ldots,\tf_c)R_Q$ and $\Sigma':=S_P/(f_1,\ldots,f_c)S_P$
    satisfy $$\wsig'\otimes_\ZZ\zp=\Sigma'=\zp(a_1,\ldots, a_n).$$ 
  However,  these $\wsig'$ and $\Sigma'$ are not appropriate for the skeletons,
  because these are not finitely generated as $\ZZ$-algebra and $\zp$-algebra, respectively.
  So we replace $\wsig'$ and $\Sigma'$ by smaller ones.

    As $PS_P=(f_1,\ldots,f_c)S_P$, there exists $h\in S\setminus P$ such that 
    $PS_h=(f_1,\ldots,f_c)S_h$.
    Take $\th\in R\setminus Q$ such that $\psi(\th)=h$ and let
    $$\widetilde\Sigma:=R_\th/(\tf_1,\ldots,\tf_c)R_\th.$$
   This, as well, is an integral domain and of characteristic $0$.
    Now noting that $S/P=\zp[a_1,\ldots, a_c]$, we obtain
    $$\Sigma:=\widetilde\Sigma\otimes_\ZZ\zp=\frac{R_\th/(\tf_1,\ldots,\tf_c)R_\th}
    {p({R_\th/(\tf_1,\ldots,\tf_c)R_\th})}
    =S_h/(f_1,\ldots,f_c)S_h$$
    $$=(S/P)_h\subset 
    \zp(a_1,\ldots,a_c) \subset k.$$
  By the definitions, $\wsig$ and $\Sigma$ are finitely generated as $\ZZ$-algebra and as $\zp$-algebra, respectively.
    By the surjection $\Phi_p:\widetilde\Sigma\to \widetilde\Sigma\otimes_\ZZ\zp=\Sigma$
    we can take $\widetilde{a}_1, \ldots, \widetilde{a}_n\in \widetilde\Sigma$
    corresponding to $a_1,\ldots, a_n\in \Sigma$.
     Now, by the definition of $\widetilde\Sigma$, the field $K_0:=Q(\widetilde\Sigma)$ of fractions of $\widetilde\Sigma$ is
     a finitely generated field extension of $\QQ$.
     Then, by Baby Lefschetz Principle (see, for example, \cite[Proposition 4]{tao}), 
     there is an isomorphism into the subring:
     $$\phi: K_0 \stackrel{\sim}\longrightarrow \phi(K_0)\subset \CC.$$
     Then, we obtain 
     $\{\widetilde a_1,\ldots, \widetilde a_n\}\subset \CC$
      and $\{\widetilde a_1,\ldots, \widetilde a_n\} (\mod p)=\{a_1,\ldots, a_n\}$.
      As is seen, $\wsig\subset \CC$ and $\Sigma\otimes \zp$ are compatible skeletons.
\end{proof}
     
\noindent      
{\bf(3)}  Let $\Sigma\subset k$ and $\wsig \subset \CC$ be  compatible skeletons.
      Then, $S:=\Sigma[x_1,\ldots,x_N]\subset 
      k[x_1,\ldots,x_N]$ and  $\ws:= \wsig[x_1,\ldots,x_N]\subset \CC[x_1,\ldots,x_N]$
      are  also compatible skeletons for $\CC[x_1,\ldots,x_N] (\mod p)= k[x_1,\ldots,x_N]$
      in the canonical way.
      Explicitly, a polynomial $f\in S$ has a lifting $\tf\in \ws$ in the following form:
          $$f=\sum_{i\in \ZZ^N_{\geq 0}} a_i{\bold{x}}^i\ \ \  ,\ \ \mbox{then}\ \ \ \tf=\sum_{i\in \ZZ^N_{\geq 0}} \ta_i\bold{x}^i,$$
    where $\ta_i$ is a lifting of $a_i$.
\end{ex}

\begin{defn} Let $R$ and $\wr$ be finitely generated algebras 
over  fields $k$ and $K$ of characteristic $p>0$ and of 0, respectively.

Assume $\wr (\mod p)=R$ holds by compatible skeletons $\Phi_p: \ws\to S$, where $S\subset R$ and 
        $\ws\subset \wr$.
        
        For  ideals $\a_R\subset R$ and $\taa_\wr\subset \wr$  we write $\taa_\wr(\mod p)=\a_R$ and say
       {\sl  `` $\taa_\wr $ is a lifting of $\a_R$" } if we can take a generator system of $\taa_\wr$ from $\ws$ whose image by $\Phi_p$
        generates
         $\a_R\subset R$. It is equivalent to the following:

          \begin{enumerate}
\item There are ideals $\a\subset S$ and $\taa\subset \ws$
       such that $\taa_\wr=\taa\wr$, $\a_R=\a R$, and  
\item         $\Phi_p(\taa)=\a$.
\end{enumerate}
 In this case, we call $\a$  a {\sl skeleton of }$\a_R$, and $\taa$  a {\sl skeleton of} $\taa_\wr$.
                
\end{defn}

\begin{rem} According to the definition above, the unit ideal $\wr$ itself is a lifting of any ideal $\a_R\subset R$ generated by
 elements of the skeleton $S\subset R$.
 Because the pull-back $\taa:=\Phi_p^{-1}(\a)$ satisfies the conditions above and contains $p\in \ZZ\subset \wr$,
 a unit element of $\wr$.

\end{rem}
\noindent
This gives a trivial example that
 a lifting of a prime ideal in $R$ is not necessarily prime. 
 For a prime ideal, we define a ``prime lifting''
 in the skeleton rings.

 \begin{defn} \label{def-of-prime-lift}
      Let $S\subset R$ and 
        $\ws\subset \wr$ be compatible skeletons  for finitely generated algebras $R$ and $\wr$  
over  fields $k$ and $K$ of characteristic $p>0$ and of 0, respectively.
Let $\p\subset S$ and $\wp\subset \ws$ be prime ideals.
 We call $\wp$ a {\sl  prime lifting of $\p$ in the skeleton level}  if $\Phi_p(\wp)=\p$ and $\height \p=\height \wp$.

  Sometimes we omit the term ``in the skeleton level'' when it is clear that we are working in the skeletons.
\end{defn}   
    

In general, a prime lifting in the skeleton level does not exist.
However, it does exist under certain conditions.


\begin{prop}\label{lift-of-maximal-ideal}
Let $k$ be an algebraically closed field of characteristic $p>0$.
      Let $R$ be a finitely generated $k$-algebra of dimension $N\geq2$, and 
      $\wr$ a finitely generated $\CC$-algebra.
        Assume 
        $$\wr (\mod p)=R$$
         with the compatible skeletons $S\subset R$ and $\ws\subset \wr$.
         
   Let $X:=\spec R$,  and $x\in X$ a regular closed point.
   Assume 
   the defining ideal $\m_x\subset R$ is generated by RSP $f_1,\ldots,f_N $ of $\m=\m_x\cap S$,
   then $\m_x$
   has a prime lifting in the skeleton level.
\end{prop}
\begin{proof} For each $1\leq i\leq N$, take a lifting, $\tf_i\in \ws$ of $f_i$.
     As $\{f_1,\ldots,f_N \}$ is a regular sequence in $S_\m$, the lifting 
     $$\{\tf_1,\ldots, \tf_N\}\subset \ws_{\Phi_p^{-1}(\m)}$$
     forms a regular sequence and also
   \[
W:=\widetilde{S}_{\Phi_p^{-1}(\mathfrak m)}/(\widetilde f_1,\dots,\widetilde f_N)
\]
is $\mathbb Z$-flat at $p$ by [16, Corollary p.~177]. Since
\[
W/pW \simeq W\otimes_{\mathbb Z}\mathbb Z/(p)
 \simeq S_{\mathfrak m}/(f_1,\dots,f_N)
\]
is an integral domain and $W$ is $\mathbb Z$-flat at $p$, multiplication by $p$
is injective on $W$. Moreover, $W$ is Noetherian local and $p$ belongs to its
maximal ideal, so
\[
\bigcap_{n\ge 0} p^nW = 0
\]
by Krull's intersection theorem. We claim that $W$ is an integral domain.
Indeed, if $ab=0$ with $a,b\neq 0$, then by the above intersection property we
can write
\[
a=p^r a', \qquad b=p^s b'
\]
with $a',b'\notin pW$. Since $p$ is a non-zero-divisor on $W$, it follows that
$a'b'=0$. Reducing modulo $p$, we obtain
\[
\overline{a'}\,\overline{b'}=0 \qquad \text{in } W/pW,
\]
which is impossible because $W/pW$ is an integral domain. Thus $W$ is an
integral domain. Therefore $(\widetilde f_1,\dots,\widetilde f_N)$ is a prime
ideal in $\widetilde{S}_{\Phi_p^{-1}(\mathfrak m)}$.
     Define $\wm:=(\tf_1,\ldots, \tf_N)\ws_{\Phi_p^{-1}(\m)}\cap \ws$,
     then it is a prime ideal of $\ws$ of height $N$ and $\Phi_p(\wm)=\m$.
\end{proof}

\begin{cor}\label{inclusions}
    Under the condition of Proposition \ref{lift-of-maximal-ideal}, assume $H\subset X$ is a smooth 
    subvariety, and its defining ideal  $I_H$ is generated by a part of RSP of $\m:=\m_x\cap S$ 
    and $I_H$ has a prime lifting $\wI\subset \ws$ in the skeleton level.
    If a closed point $x\in X$ is contained in $H$, {\it i.e.,}
 $$I_H\subset \m_x,$$   
    then there is a prime lifting $\wm\subset \ws$ of the 
   defining  ideal $\m_x$ of $x$, such that $$\wI\subset \wm.$$ 
\end{cor}

\begin{proof}
      By the assumption that $I_H$ has a prime lifting,
      we may assume that $I_H\subset R$ is generated by $I=I_H\cap S\subset S$
      and $\m_x$ is generated by $\m=\m_x\cap S$.
      Then, $S'=S/I$ is a skeleton of $R/I_H$ which is the affine ring of smooth variety $H$.
      We can regard $\ws'=\ws/\wI$ as a compatible skeleton of $S'$.
      Apply Proposition \ref{lift-of-maximal-ideal} to $\m'=\m/I\subset S'$,
      then we obtain its prime lifting 
      $$\wm' \subset \ws',$$
      which yields a prime lifting $\wm=\pi^{-1}(\wm')$ of $\m$ satisfying $\wm\supset \wI$,
      where $\pi: \ws\to \ws'$ is the canonical surjection.

\end{proof}


In this paper, we work on algebras of the special type: {\it i.e.,} a finitely generated $k$-subalgebra $R$ of a polynomial ring.
On such an algebra $R$, properties on an ideal $\a$ of $R$ are reduced to the properties of its skeleton ideal $\a\cap S$ as follows:

\begin{lem}\label{ext-of-field}
Let $K$ be a field, let $\Sigma$ be a skeleton of $K$, and let
$Q$ be the field of fractions of $\Sigma$.
Let
\[
S \subset \Sigma[x_1,\ldots,x_m]
\]
be a finitely generated $\Sigma$-subalgebra, and let $\mathfrak a \subset S$ be an
ideal such that $\mathfrak a \cap \Sigma = \{0\}$.
Let
\[
Q[S]\subset Q[x_1,\ldots,x_m]
\qquad\text{and}\qquad
K[S]\subset K[x_1,\ldots,x_m]
\]
be the $Q$- and $K$-subalgebras generated by $S$, respectively, and let
\[
\mathfrak a_Q \subset Q[S],
\qquad
\mathfrak a_K \subset K[S]
\]
be the ideals generated by $\mathfrak a$.
Then the following hold.

\begin{enumerate}
\item[(i)]
We have
\[
\operatorname{ht}\mathfrak a
\le
\operatorname{ht}_+\mathfrak a
=
\operatorname{ht}\mathfrak a_Q
=
\operatorname{ht}\mathfrak a_K,
\]
where
\[
\operatorname{ht}_+\mathfrak a
:=
\min\bigl\{
\operatorname{ht}\mathfrak p
\;\big|\;
\mathfrak p\in \operatorname{Ass}_S(S/\mathfrak a),
\ \mathfrak p\cap \Sigma=\{0\}
\bigr\}.
\]

\item[(ii)]
If $\mathfrak a$ is a prime ideal, then so is $\mathfrak a_Q$.
In this case,
\[
\operatorname{ht}\mathfrak a
=
\operatorname{ht}\mathfrak a_Q
=
\operatorname{ht}\mathfrak a_K.
\]
Moreover, if $\mathfrak a_K$ is a regular prime ideal of $K[S]$, then
$\mathfrak a$ is a regular prime ideal of $S$.

\item[(iii)]
Assume that $\mathfrak a$ is prime.
Let $f\in S\setminus \Sigma$ and $n\in \mathbb Z_{>0}$.
Then the following are equivalent:
\begin{enumerate}
\item[(a)]
\[
f\in \mathfrak a^n S_{\mathfrak a}\setminus \mathfrak a^{n+1}S_{\mathfrak a};
\]
\item[(b)]
\[
f\in \mathfrak a_Q^n (Q[S])_{\mathfrak a_Q}
\setminus
\mathfrak a_Q^{n+1}(Q[S])_{\mathfrak a_Q};
\]
\item[(c)]
if $\mathfrak a_K$ is prime, then
\[
f\in \mathfrak a_K^n (K[S])_{\mathfrak a_K}
\setminus
\mathfrak a_K^{n+1}(K[S])_{\mathfrak a_K}.
\]
\end{enumerate}

\item[(iv)]
Assume that $\operatorname{char}K=0$ and that $\mathfrak a_Q$ is a locally
principal prime ideal.
Then, for every $f\in S\setminus \Sigma$ and every $n\in \mathbb Z_{>0}$,
conditions {\rm (a)} and {\rm (b)} above are also equivalent to:
\begin{enumerate}
\item[(d)]
for every minimal prime $\mathfrak p$ of $\mathfrak a_K$, the ideal $\mathfrak p$
is locally principal at its generic point and
\[
f\in \mathfrak p^n K[S]_{\mathfrak p}
\setminus
\mathfrak p^{n+1} K[S]_{\mathfrak p}.
\]
\end{enumerate}
\end{enumerate}
\end{lem}

\begin{proof}
Since $Q[S]=S_{\Sigma\setminus\{0\}}$, localization gives
\[
\operatorname{Ass}_{Q[S]}(Q[S]/\mathfrak a_Q)
=
\bigl\{
\mathfrak p_Q
\;\big|\;
\mathfrak p\in \operatorname{Ass}_S(S/\mathfrak a),\ 
\mathfrak p\cap\Sigma=\{0\}
\bigr\}.
\]
Hence
\[
\operatorname{ht}_+\mathfrak a=\operatorname{ht}\mathfrak a_Q.
\]
On the other hand, $Q\hookrightarrow K$ is faithfully flat, so the induced map
\[
Q[S]\hookrightarrow K[S]=K\otimes_Q Q[S]
\]
is also faithfully flat. Therefore, by \cite[III, Corollary 9.6]{ha}
(or algebraically by \cite[Theorem 15.1]{mats}),
the height of a minimal prime of $\mathfrak a_Q$ agrees with the height of a
minimal prime of its extension to $K[S]$. Thus
\[
\operatorname{ht}\mathfrak a_Q
=
\operatorname{ht}(\mathfrak a_QK[S])
=
\operatorname{ht}\mathfrak a_K.
\]
This proves the equalities in {\rm (i)}, and the inequality
$\operatorname{ht}\mathfrak a\le \operatorname{ht}_+\mathfrak a$ is obvious.

For {\rm (ii)}, if $\mathfrak a$ is prime, then $\mathfrak a_Q$ is prime as a
localization of $\mathfrak a$.
The height equalities follow immediately from {\rm (i)}.
Moreover, the map
\[
S\hookrightarrow K[S]
\]
is flat, being the base change of the flat homomorphism $\Sigma\hookrightarrow K$.
Hence, if $\a_K$ is a regular prime ideal, then
$\a=\a_K\cap S$ is also regular
(see, for example, \cite[Theorem 15.1(ii)]{mats}).

We next prove {\rm (iii)}.
Since $\Sigma\setminus\{0\}\subset S\setminus \a$, localization yields
\[
(Q[S])_{\mathfrak a_Q}\cong S_{\mathfrak a},
\qquad
\mathfrak a_Q (Q[S])_{\mathfrak a_Q}=\mathfrak a S_{\mathfrak a}.
\]
Therefore, for every $n\ge 1$,
\[
\mathfrak a_Q^n (Q[S])_{\mathfrak a_Q}
=
\mathfrak a^n S_{\mathfrak a},
\]
and hence {\rm (a)} and {\rm (b)} are equivalent.

Assume now that $\mathfrak a_K$ is prime.
Since $Q\hookrightarrow K$ is faithfully flat, we have
\[
\mathfrak a_K^n\cap Q[S]=\mathfrak a_Q^n
\qquad (n\ge 1).
\]
It follows that the $\mathfrak a_Q$-adic order of $f$ is the same as the
$\mathfrak a_K$-adic order of $f$, so {\rm (b)} and {\rm (c)} are equivalent.

Finally, we prove {\rm (iv)}.
Since $Q[S]\hookrightarrow K[S]$ is flat, every minimal prime $\mathfrak p$ of
$\mathfrak a_K$ satisfies
\[
\operatorname{ht}\mathfrak p=\operatorname{ht}\mathfrak a_Q
\]
by \cite[Theorem 15.1]{mats}.
Because $\mathfrak a_Q$ is locally principal and prime, the local ring
$Q[S]_{\mathfrak a_Q}$ is a regular local ring of dimension one.
Hence $Q\to Q[S]$ is smooth around $\mathfrak a_Q$, and therefore the base change
\[
K\to K[S]
\]
is also smooth around every minimal prime $\mathfrak p$ of $\mathfrak a_K$.
In particular, each such $\mathfrak p$ is regular of height one, and hence is
principal at its generic point.

Now the natural local homomorphism
\[
Q[S]_{\mathfrak a_Q}\to K[S]_{\mathfrak p}
\]
is flat, and since $\operatorname{char}K=0$, the maximal ideal of
$K[S]_{\mathfrak p}$ is the extension of the maximal ideal of
$Q[S]_{\mathfrak a_Q}$ (see, for example, \cite[Lemma 33.5.3]{stack}).
Thus a generator of $\mathfrak a_QQ[S]_{\mathfrak a_Q}$ becomes a generator of
$\mathfrak pK[S]_{\mathfrak p}$.
Consequently,
\[
f\in \mathfrak a_Q^n (Q[S])_{\mathfrak a_Q}
\quad\Longleftrightarrow\quad
f\in \mathfrak p^n K[S]_{\mathfrak p},
\]
and similarly for $n+1$.
Therefore {\rm (b)} and {\rm (d)} are equivalent.
\end{proof}

\begin{rem} Lemma \ref{ext-of-field} 
also holds  for the case that $S \subset \Sigma[x_1,\ldots, x_N]$ has a graded algebra structure and ideals are  homogeneous ideals,
where the degree is given by a weighted degree of $x_i$'s.
\end{rem}

 %
 %
 


%


\begin{lem}\label{coef-extension} Let $k$ be a field, and $\Sigma\subset k$ a skeleton.
 Let $R$ be a finitely generated $k$-subalgebra of $ k[x_1,\ldots,x_m]$ and
   $S\subset R$ a skeleton such that it is a $\Sigma$-subalgebra of    $\Sigma[x_1,\ldots,x_m]$.
  Let $$f_i\in R\ \ (i=1,\ldots, n)$$
   be any finite number of elements of $R$.
  Then, by replacing $\Sigma$ by a larger skeleton $\Sigma'\supset \Sigma$, and $S$ by $S':=\Sigma'[S]$.
  We obtain a new skeleton $S'\subset R$ such that 
  $$f_i \in S'\ \ \  (i=1,\ldots, n).$$ 
\end{lem}

\begin{proof}
   As $S$ is finitely generated over $\ZZ/(p)$, where $p=\cha \ k$, $S$ is  finitely generated also over $\Sigma$
    so that we can express,  $S=\Sigma[s_1,..,s_n]$ with $s_1,\ldots s_n\in S$.
  Then, $R=k [s_1,..,s_n]$, because $R=k[S]$.
 Therefore, an element  $f \in R$ is expressed as 
 $$ f = F(s_1,...,s_n),\ \mbox{where}\ \   F(Y_1,\ldots,Y_n)\in k[Y_1,\ldots,Y_n],$$
   We call the coefficients of $F$ ``coefficients of $f$."
      Let $\{ a_i \}\subset k$ be the set of all coefficients of $F$.
      Note that it is a finite set.
      If these  already belong to $\Sigma$, then  $f \in S$.
      If not, let $\Sigma'$ be $\Sigma[ a_i ]$.
      Then, $S' := \Sigma' [S]$ becomes a larger skeleton of $R $ with $f\in S'$.
      In this way, adding the coefficients of the finite number of elements $\{f_i\}$ of $R$ to the original $\Sigma$,
      we obtain a new skeleton $S$ of $R$ containing all $\{f_i\}$.
      \end{proof}
This extension method is used repeatedly throughout the paper.
When we use this method, we say ``by adding the coefficients'' or ``by enlarging the coefficient skeleton''.

\begin{rem} Let $\Sigma\subset k$ be a skeleton.
If the original skeleton $S\subset \Sigma[x_1,\ldots,x_m]$ has a lifting skeleton 
$\ws\subset \wsig[x_1,\ldots,x_m]$, 
then an enlarging $\Sigma'\supset \Sigma$ of the coefficient skeleton 
induces an enlarged skeleton $\wsig'\supset \wsig$ by Example \ref{example-liftings}, which yields enlarged
compatible skeletons 
$$\ws':=\wsig'[\ws]\to S':=\Sigma'[S]$$ 
\end{rem}


\begin{prop} \label{height-of-skeletons}
       Let $k$ be a field of characteristic $p>0$.
        Let $R\subset k[x_1,\ldots, x_m]$ be a finitely generated $k$-subalgebra and
        $\wr\subset \CC[x_1,\ldots, x_m]$ a finitely generated $\CC$-algebra.
        Assume $\wr (\mod p)=R$ with the compatible skeletons $S\subset R$ and $\ws\subset \wr$ 
        such that these are $\Sigma$-subalgebra and $\wsig$-subalgebra of $\Sigma[x_1,\ldots, x_m]$ and
        $\wsig[x_1,\ldots, x_m]$, respectively.
        Let $\a\subset R$ be a non-zero proper ideal and $\taa\subset \wr$ its lifting with respect to 
        these compatible skeletons $S$ and $\ws$.
                Then, $$\height \a\leq \height \taa.$$
\end{prop}

\begin{proof}  Let $\p_{1},\ldots, \p_{s}$ be the minimal associated primes of $\a\subset R$.
    Then, $\underline{\p_i}:=\p_{i}\cap S$'s are the minimal associated primes of $\underline\a=\a\cap S$
    and satisfy 
     $\underline{\p_i }\cap \Sigma=\{0\}$, since otherwise $\p_{i}$ would contain  a unit in $R$, a contradiction.
     Here, we may assume that $\Sigma$ contains all coefficients of the generators of $\underline\p_i$'s by  enlarging 
     the coefficient skeleton $\Sigma$
     as in Lemma \ref{coef-extension}.
    Therefore, $\p_i\subset R$ is the extensions of $\underline\p_i\subset S$, and by Lemma \ref{ext-of-field}, (i), it follows
    $$\height \a=\height\underline\a.$$
    On the other hand, let $\underline\taa\subset \ws$ be the skeleton ideal of $\taa\subset \wr$, and
    $\wp$  a minimal associated prime of $\taa\subset\wr$ such that $\height \taa=\height \wp$.

    Then, as $\wr$ is flat over $\ws$, it follows
 \begin{equation}\label{height1}   
    \height \wp\geq\height \underline\wp,
 \end{equation}   
    where $\underline\wp=\wp\cap \ws$ (see, for example, \cite[Theorem 15.1, (ii)]{mats}).
    As $\underline\wp\cap \wsig=\{0\}$, a fortiori $\underline\wp\cap \ZZ=\{0\}$ holds,
    which yields 
\begin{equation}\label{height2}       
    \height\underline\wp=\height(\underline\wp\otimes_\ZZ\QQ)=\height(\underline\wp\otimes_\ZZ\zp)
    =\height \Phi_p(\underline\wp),
\end{equation}    
    where the second equality follows from the flatness of $\ws$ over $\ZZ$.
    Here, since $\Phi_p(\underline\wp)\supset \Phi_p(\underline\taa)=\underline\a$, it follows  
\begin{equation}\label{height3}       
    \height \Phi_p(\underline\wp)\geq
    \height \underline\a.
\end{equation}
By connecting inequalities (\ref{height1}), (\ref{height2}) and (\ref{height3}),
we obtain $\height \taa\geq \height \a$.    
   \end{proof}
\vskip.5truecm

\section{log discrepancies for multi-ideals and log canonical thresholds of  ideals}
\noindent
   In this section we study a pair $(A, \a^e)$ consisting of a smooth 
   affine variety $A$ of dimension $N$  defined
   over a field of arbitrary characteristic
      and a ``multi-ideal" $\a^e=\a_1^{e_1}\cdots\a_r^{e_r}$ on $A$, 
      where $\a_i$'s are non-zero coherent ideal sheaves on $A$ and $e$ is a combination of  the exponents 
     $e=(e_1,\ldots,e_r)\in \RR_{>0}^r$.
     
\begin{defn}\label{p.d.} Let $A$ be a normal 
variety defined over  a field $k$, and let $P\in A$ be a (not necessarily closed) point.

Let  \( \varphi_1:A_1 \to A \) be
a proper birational morphism with  $A_1$ normal and $E_1\subset A_1$ 
an irreducible divisor.
 Let  \( \varphi_2:A_2 \to A \) be another
 proper birational morphism from a normal $A_2$ with an irreducible divisor $E_2\subset A_2$.
 Define 
 $$E_1\sim E_2$$
  if  the birational  map
  \(\varphi_2^{-1}\circ \varphi_1 : A_1  \dasharrow  A_2 \) is  a local isomorphism at the 
  generic points of \( E_1 \) and $E_2$.
 Then, $\sim$ becomes an equivalence relation, and
  such an equivalence class  is called a {\sl prime divisor over $A$}. 
    In this case we denote $E_1$ and
   \( E_2 \) by the same symbol, say $E$, and we say ``$E$ appears on $A_1$ and on $A_2$''.
   (Strictly speaking, we should be talking about the
corresponding valuation instead.)
 If $P$ is the generic point of  $\varphi_1(E)$, then we call $E$  a {\sl prime divisor over $A$ with the center at $P$}
 or just a {\sl prime divisor over $(A,P)$}.
\end{defn}

\begin{defn}\label{defoflogcano}
   Let $(A, \a^e)$ be a pair as in the beginning of this section.
    The {\sl log discrepancy} of such a pair 
    at a prime divisor $E$ over $A$ is 
    defined as 
$$a(E; A,\a^e):=k_E+1-\sum_{i=1}^r e_i\cdot  v_E(\a_i),$$
      where $k_E$ is the coefficient of the relative canonical divisor 
      $K_{{A}'/A}$ at $E$.\\
Here, $\varphi:{A'}\to A$ is a birational morphism such that $E$ appears on 
  a normal variety 
   ${A'}$. 
   The valuation of an ideal $\a_i$ at $E$ is defined as follows:
   $$ v_E(\a_i):=\min\{ v_E(f) \mid f\in \a_i\}$$

   We say that the pair $(A, \a^e)$ is log canonical  at a (not necessarily closed)
     point  $P\in A$ if
\begin{equation}\label{discrepancy}
a(E;A,\a^e)\geq 0,
\end{equation}
holds for every exceptional prime divisor $E$ over $A$ whose center contains $P\in A$.
\end{defn}

\begin{defn}\label{defoflct}
   Let $A$ be a smooth variety, $P\in A$ a  point and $\a^e$ a non-zero multi-ideal on $A$.
   We define the {\sl log canonical threshold} of the pair $(A,\a^e)$ at $P$ as follows:
   $$\lct (P; A, \a^e)= \sup\{ c \in \RR_{>0} \mid (A, (\a^e)^c)\ \mbox{is\ log \ canonical\ at \ }P\}.$$
   
   Let $E$ be a prime divisor over $A$ whose center on $A$ is contained in the locus of $\a$.
   Define 
   $$z(E; A, \a^e):=\frac{k_E +1}{v_{E}(\a^e)},$$
   where, we define $v_E(\a^e):=\sum_i e_i\cdot v_E(\a_i)$.
   Then, the following is well known:
   $$\lct(P; A,\a^e)=
   \inf \{z(E; A, \a^e) \mid E\ \mbox{is\ a\ prime\ divisor\ over\ }A \ \mbox{with\ the \ center\ containing\ }P\}.$$
\end{defn}

\begin{defn}\label{defofmld}
    Let $(A,\a^e)$ and $P\in A$ be  as in Definition \ref{defoflogcano}.
    Then, the minimal log discrepancy is defined as follows:
    
 \begin{enumerate}
\item When $\dim A\geq 2$,
    $$\mld (P; A, \a^e)
    :=\inf\{ a(E;A, \a^e) \mid  E :\ {\operatorname { prime\ divisor \ over }}\ (A, P) \}.$$ 
\item When $\dim A=1$, define $\mld (P; A,\a^e)$ 
     by the same definition as in (1) if the right hand side
     of the equality (1) 
     is non-negative and otherwise define $\mld(P; A, \a^e)= -\infty$.
\end{enumerate}
    Here, we remark that for every pair $(A, \a^e)$, either $\mld(P; A, \a^e)\geq 0$ or
    $\mld(P; A, \a^e)=-\infty$ holds.

\end{defn}

 \begin{defn} 
    Let $A$, $N$,  and $\a^e$ and  $P\in A$ be as in Definition \ref{defoflogcano}.
    We say that a prime divisor $E$ over $(A,P)$  
    computes $\mld(P; A, \a^e)$,
    if 
    
    $$\left\{\begin{array}{ll}
       a(E; A, \a^e)=\mld(P; A, \a^e),& \mbox{when}\ \ \mld(P; A, \a^e)\geq 0\\
       a(E; A, \a^e)<0,  &    \mbox{when}\ \ \mld(P; A, \a^e)=-\infty\\ 
    \end{array}\right. $$
    Let $A$, $P$ and $\a^e$ be as in Definition \ref{defoflct}.
    Let $E$ be a prime divisor  over $A$ with the center containing $P$.
    We say that a prime divisor $E$ computes $\lct(P;A,\a^e)$ if
    $$z(E;A,\a^e)=\lct (P;A,\a^e).$$
\end{defn}

   


\begin{defn}
  
 Let \( X \) be a scheme of finite type over a field $k$ and  $k'\supset k$ a field extension.
For  \( m\in \ZZ_{\geq 0} \) a \( k \)-morphism \( \spec k'[t]/(t^{m+1})\to X \) is called an  {\it{\( m \)-jet}} of \( X \) and 
 \( k \)-morphism \( \spec k'[[t]]\to X \) is called an {\sl {arc}} of \( X \).
\end{defn}


Let 
 \( X_{m} \) be the {\sl space of \( m \)-jets} 
   of  \( X \). 
   It is well known that $X_m$ has a scheme structure of finite type over $k$.
   If $X$ is an affine variety, then there exists the projective limit $$X_\infty:=\lim_{\overleftarrow {m}} X_m$$
   and it is called the {\sl space of arcs} or the {\sl space of $\infty$-jet} of $X$.
\begin{defn}
    Denote the canonical truncation morphisms induced from $k[[t]]\to k[t]/(t^{m+1})$ 
    and $k[t]/(t^{m+1})\to k$  by
    $\psi_m: X_\infty\to X_m$ and $\pi_m: X_m\to X$, respectively.
    In particular we denote the  morphism  $\psi_0=\pi_\infty : X_\infty \to X$ by $\pi$.
    
\end{defn}

\begin{defn} Let  $\a$ be a non-zero ideal on 
       a variety $X$.
        We define the subsets ``{\sl contact loci of }$\a$''  in the space of arcs  as follows:
       $$\cont^{\geq m}(\a)=\{\gamma \in X_\infty \mid \ord_\gamma(\a):=\ord_t  
       \gamma^*(\a)\geq m\},$$
       where $\gamma^*:\oo_X\to k'[[t]]$
      is the  homomorphism of rings corresponding to $\gamma$.
\end{defn}


\begin{ex}\label{equation}
     Let $Z$ be a closed subscheme of affine $N$-space
    $A:=\AA_k^N\\ =\spec k[x_1,\ldots,x_N]$ defined over a field $k$.
    Let 
    $\a\subset k[x_1,\ldots,x_N]$ be the defining ideal of $Z$ in $A$.
    Assume $\a$ is generated by $f_1,\ldots, f_r\in k[x_1,\ldots,x_N]$.
    We define polynomials $$F_i^{(j)}\in k\left[x_\ell^{(q)}\middle| 1\leq \ell\leq N, 0\leq q \leq j\right]$$
    so that 
    $$f_i\left(\sum_{q\geq 0} x_1^{(q)}t^q, \sum_{q\geq 0} x_2^{(q)}t^q, \ldots, \sum_{q\geq 0} x_N^{(q)}t^q\right)=
    F_i^{(0)}+F_i^{(1)}t +\cdots + F_i^{(j)}t^j+ \cdots.$$
    
    Then, the contact locus $\cont ^{\geq m}\a\subset A_\infty$ is defined  
    by the ideal of $k[x_\ell^{(q)}\mid 1\leq \ell\leq N, 0\leq q ]$
    generated by
    $$F_i^{(j)}\ \  (i=1,\ldots, r, j=0, 1,\ldots, m-1).$$
    Here, we note that if all coefficients of $f_i$'s are in a subring $\Sigma\subset k$,
    then all coefficients of   $F_i^{(j)}$'s are also in $\Sigma$.  
    (This fact will be used in the proof of Lemma \ref{inequality of mld}.)
    
    The fiber $\pi_m^{-1}(0)$ of the origin $0\in \AA_k^N=A$ 
    by the truncation morphism $\pi_m: A_m\to A$ is
    defined by the ideal
    $$(x_1^{(0)},\ldots,x_N^{(0)})\subset 
    k[x_\ell^{(q)}\mid 1\leq \ell\leq N, 0\leq q \leq m].$$
\end{ex}

\vskip.5truecm
The following Proposition \ref{formula} and Lemma \ref{inequality of mld} 
require the base field $k$ to be perfect. 
Since $k$ is algebraically closed, we can apply these formulas.

\begin{prop}
\label{formula}  Let $A$ be a smooth variety  defined over a perfect field,
   $P\in A$ a  closed  point and $e=(e_1,\ldots, e_r)
  \in \RR_{>0}^r$. 
  Let $\a^e$ be a multi-ideal $\a_1^{e_1}\cdots \a_r^{e_r}$ on $A$. 
  Then, it follows:

\begin{equation}\label{mld}
\mld(P; A, \a^e) 
=\inf_{m\in \ZZ_{\geq0}^r}\left\{\codim\left(\bigcap_i (\cont^{\geq m_i}(\a_i)\cap \pi^{-1}(P), A_\infty\right)-\sum_{i=1}^r m_ie_i
 \right\}.
 \end{equation}

 \begin{equation}\label{lct}
 \lct(P; A, \a^e)=\inf_{m\in \ZZ_{\geq0}}\left\{ \frac{\codim_P\left(\bigcap_i\cont^{\geq m_i}(\a_i), A_\infty\right)}{\sum_i m_ie_i}\right\},
 \end{equation}
 where $\codim_P(\cont^{\geq m}(\a), A_\infty)$ is  the minimum of  codimensions of  the irreducible components $T$ of 
 $\cont^{\geq m}(\a)$ in $ A_\infty$
 such that $\pi(T) \cap \{P\}\neq \emptyset$.

\end{prop}   
\begin{proof}
The equality (\ref{mld}) is obtained by
\cite{EM} for characteristic $0$ and by  \cite{ir2} for arbitrary characteristic.
The equality (\ref{lct}) is obtained by \cite{must} for characteristic $0$ and by \cite{zhu} for arbitrary characteristic.
\end{proof}

\begin{lem}\label{inequality of mld}
     Let $k$ be a perfect field of characteristic $p>0$ and $A:=\AA_k^N$, $\wa:=\AA_\CC^N$. 
     Let  $\a^e$  be a multi-ideal on $A$.
     We denote the origins in $A$ and $\wa$ by $0$ and $\tze$.
     Let $\taa^e$  be a multi-ideal  on $\wa$.
     Assume $\taa_i (\mod p)=\a_i$ for each $i$.
     Then, it follows
\begin{equation}\label{ineq-of-ideals}
\mld(0; A, \a^e)\leq \mld(\tze; \wa, \taa^e),
\end{equation}
\begin{equation}\label{ineq-of-lct}
\lct(0; A, \a^e)\leq \lct(\tze; \wa, \taa^e).
\end{equation}

\end{lem}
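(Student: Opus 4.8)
The plan is to exploit the description of minimal log discrepancies and log canonical thresholds via arc spaces (or, equivalently, via the contact-locus stratification of the jet schemes), since these descriptions are "characteristic-insensitive at the level of skeletons." First I would fix compatible skeletons $\ws \subset \CC[x_1,\ldots,x_N]$ and $S = \ws\otimes_\ZZ\zp \subset k[x_1,\ldots,x_N]$ as in Example \ref{example-liftings}(2), chosen so that each $\a_i$ admits a skeleton ideal $\a_i^0 := \a_i\cap S$ with $\taa_i = \a_i^0\ws$ and $\a_i = \a_i^0 S$; this is exactly the hypothesis $\taa_i(\mod p)=\a_i$. The point $0$ corresponds to $\m\cap S$ and $\tze$ to $\wm\cap\ws$, and these are compatible.

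Next I would recall the Ein--Mustaţă--Yasuda type formula expressing $\mld(0;A,\a^e)$ and $\lct(0;A,\a^e)$ as an infimum, over the cylinders $\cont^{\ge \mathbf{m}}(\a)\cap \cont^{\ge 1}(\m_0)$ in the arc space of $\AA^N$, of affine-linear functions of $\mathbf{m}=(m_1,\ldots,m_r)$ and of the codimension of that cylinder. The key observation is that the relevant jet scheme $\j_n(\AA^N)$ and the closed subschemes cut out by $\a_i^0$ are all defined already over $S$ (indeed over $\ZZ$ after clearing denominators), so that the fibers over $\spec\zp$ and over $\spec\QQ$ are the mod-$p$ and characteristic-zero reductions of one and the same $\ZZ$-scheme. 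The codimension of a cylinder can only go \emph{up} under the specialization $\ZZ \rightsquigarrow \zp$ (upper semicontinuity of fiber dimension / the inequality $\height\a\le\height\taa$ already recorded in Proposition \ref{height-of-skeletons}, applied fiberwise to the jet-level ideals). Hence each term in the infimum computing the positive-characteristic invariant is bounded above by the corresponding term computing the characteristic-zero invariant, giving $\mld(0;A,\a^e)\le\mld(\tze;\wa,\taa^e)$ and, taking the analogous infimum of the ratios $z(\cdot)$, the inequality $\lct(0;A,\a^e)\le\lct(\tze;\wa,\taa^e)$. (When $\mld$ in positive characteristic is $-\infty$ there is nothing to prove; when it is nonnegative the bound passes through.)

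I would organize the write-up as: (1) set up the common skeleton and the $\ZZ$-model of the $n$-jet schemes together with the ideals coming from the $\a_i^0$; (2) invoke the jet-scheme formula for $\mld$ and $\lct$ in each characteristic, valid since the base points $0$, $\tze$ are smooth; (3) compare dimensions of corresponding jet-level cylinders across the specialization, citing Proposition \ref{height-of-skeletons} (fiberwise) for the codimension inequality; (4) conclude by comparing the two infima term by term. Since the statement is quoted as \cite[Lemma 3.12]{I-lift}, I would in practice just cite that proof, but the above is the self-contained argument.

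The main obstacle is step (3): one must make sure the jet-level contact loci used in characteristic zero are themselves reductions of $\ZZ$-flat models whose special fibers are the positive-characteristic contact loci, so that semicontinuity of dimension applies in the correct direction, and one must handle the subtlety that the generic (characteristic-zero) fiber may be the one computing the invariant with the \emph{larger} value — i.e. one needs the inequality to go the way it does, not the reverse. Concretely, the delicate point is that a divisor (equivalently, a cylinder) computing $\mld$ or $\lct$ in characteristic $0$ need not "specialize" to one computing it in characteristic $p$; only the one-sided bound survives, and that is all that is claimed. A secondary technical point is that the exponents $e_i$ are real, so one cannot reduce to a finite log resolution and must genuinely work with the infimum over all cylinders; but this causes no real trouble since the jet-scheme formula is valid for real exponents.
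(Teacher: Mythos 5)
Your approach via the jet-scheme (Mustaţă / Ein--Mustaţă--Yasuda) characterization of $\mld$ and $\lct$ is essentially the paper's own: the present paper simply cites \cite[Lemma 3.12]{I-lift} (noting that the proof given there for a single ideal carries over verbatim to multi-ideals), and the introduction to Section 3 states explicitly that the description of these invariants by arc spaces is what makes the comparison across characteristics possible. So you have reconstructed the argument that the paper outsources to its predecessor, and your skeleton set-up plus jet-scheme reduction is the right framework.

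One slip to correct in the write-up: you assert that the codimension of a cylinder ``can only go \emph{up} under the specialization $\ZZ\rightsquigarrow\zp$,'' but this is the wrong direction and contradicts the very facts you cite in the parenthetical. Proposition \ref{height-of-skeletons} says $\height\a\le\height\taa$ with $\a$ in characteristic $p$ and $\taa$ in characteristic $0$, and upper semicontinuity of fiber dimension says the special fiber (over $p$) can only be \emph{larger}; both say the codimension of a char-$p$ contact locus is at most that of its char-$0$ lift, i.e.\ codimension can only go \emph{down}. You do then use the inequality in the correct direction two lines later --- each term $\codim(\cdot)-(\cdots)$ in the infimum formula drops in characteristic $p$, so $\mld$ and $\lct$ drop --- so the argument is salvageable and the final conclusion is stated correctly, but that sentence, taken at face value, would invert the claimed inequalities. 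The other technical points you flag (that the jet-level contact ideals be definable over a common skeleton, and that the infimum formula be available for real exponents over a smooth point) are the right things to worry about, and are handled in \cite{I-lift}.
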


\begin{proof} By the assumption, there are compatible skeletons $\widetilde\Sigma \to \Sigma$  with $\widetilde\Sigma\subset \CC$ and $\Sigma\subset k$ such that the coefficients of the  generators of $\a_i$ and of $\taa_i$ are 
in $\Sigma$ and in $\widetilde\Sigma$, respectively.

 First, we prove the following inequalities which are essential for (\ref{ineq-of-ideals}) and (\ref{ineq-of-lct}):
\begin{equation}\label{ineq-of-cont}
\codim\left(\bigcap_{i=1}^r \cont^{\geq m_i}(\a_i)\cap \pi^{-1}(0), A_\infty\right)\leq 
\codim\left(\bigcap_{i=1}^r \cont^{\geq m_i}(\taa_i)\cap \pi^{-1}(\tze), \wa_\infty\right).
\end{equation}
 Let $\a_i$ be generated by $f_{ih} \in \Sigma[x_1,\ldots,x_N]$
and $\taa_i$ by their liftings $\tf_{ih} \in \wsig[x_1,\ldots,x_N]$ $(i=1,\ldots, r, h=1,\ldots, s_i)$,
then the defining ideal $I$ of $$\bigcap_{i=1}^r \cont^{\geq m_i}(\a_i)\cap \pi^{-1}(0)$$
in $A_\infty$  is generated by $x_\ell^{(0)} (\ell =1,\ldots, N)$ and
$$F_{ih}^{(j)}\in \Sigma\left[x_\ell^{(q)}\middle| \ell=1,\ldots, N, q\geq0\right]\ \ (i=1,\ldots, r, h=1,\ldots, s_i,  j\leq m_i-1).$$
Then, we can see that the liftings $x_\ell^{(0)} (\ell =1,\ldots, N)$ and
$$\widetilde F_{ih}^{(j)} \in \widetilde\Sigma\left[x_\ell^{(q)}\middle| \ell=1,\ldots, N, q\geq0\right]\ \   (i=1,\ldots, r, h=1,\ldots, s_i,  j\leq m_i-1)$$ generate the ideal 
$\widetilde I $ defining 
$$\bigcap_i \cont^{\geq m_i}(\taa_i)\cap \pi^{-1}(\tze)$$
in $\wa_\infty$.
Here, $\widetilde F_{ih}^{(j)}$ is obtained from $\widetilde f_{ih}^{(j)}$ as in Example \ref{equation}.
So, we note that
$\widetilde F_{ih}^{(j)}$ is a lifting of $F_{ih}^{(j)}$.
By Lemma \ref{ext-of-field} and  Proposition \ref{height-of-skeletons},
 it follows that $\height I \leq \height \widetilde I$,
which yields (\ref{ineq-of-cont}).
This completes the proof of (\ref{ineq-of-ideals}), by using (\ref{mld}) in Proposition \ref{formula}.
Here, we note that Proposition \ref{height-of-skeletons} states for finitely generated $k$-algebra and $\CC$-algebra,
while our statement is for infinitely generated algebras.
However as the monomials appearing in the generators and the number of generators of $I$ and $\widetilde I$ are finite,
we can work in the same way as in Proposition \ref{height-of-skeletons}.
Concretely, we may assume for $m\gg0$, the ideal $I\subset k\left[x_\ell^{(q)}\middle| \ell=1,\ldots, N, q\geq0\right]$
is generated by elements of $k\left[x_\ell^{(q)}\middle| \ell=1,\ldots, N, 0\leq q\leq m\right]$.
Let 
$$I_m\subset k\left[x_\ell^{(q)}\middle| \ell=1,\ldots, N, 0\leq q\leq m \right]$$
be the ideal generated by the generators of $I$.
In the same way, we obtain the ideal
$$\wI_m \subset \CC\left[x_\ell^{(q)}\middle| \ell=1,\ldots, N, 0\leq q\leq m \right]$$
generating $\wI$.
As $\wI_m$ is a lifting of $I_m$, applying Proposition \ref{height-of-skeletons}, we have
$$\height I_m\leq \height \wI_m,$$
which yields 
$$\height I\leq \height \wI,$$ 
where we use $\height I=\height I_m$ and $\height \wI=\height \wI_m$.
The inequality (\ref{ineq-of-ideals}) follows from this using the formula in Proposition \ref{formula}.

For (\ref{ineq-of-lct}), we need one more step.
     We note that 
   $$\lct(0;A,\a)=\inf_m \frac{\codim_0(\bigcap_i\cont^{\geq m_i}(\a_i), A_\infty)}{\sum_im_ie_i}\leq \inf_m\frac{ \codim (\bigcap_i\cont^{\geq m_i}(\a_i)\cap \pi^{-1}(0), A_\infty)}{\sum_im_ie_i},$$
   because an irreducible component of $\bigcap_i\cont^{\geq m_i}(\a_i)\cap \pi^{-1}(0)$ is contained in an
   irreducible component $T$  of 
 $\bigcap_i\cont^{\geq m_i}(\a_i)$  with $\pi(T)\cap\{0\}\neq \emptyset$.
   Then, by (\ref{ineq-of-cont}), it follows
   $$\inf _m\frac{ \codim (\bigcap_i\cont^{\geq m_i}(\a_i)\cap \pi^{-1}(0), A_\infty)}{\sum_im_ie_i}
\leq \inf_m \frac{\codim (\bigcap_i\cont^{\geq m_i}(\taa_i)\cap \pi^{-1}(\tze), \wa_\infty)}{\sum_im_ie_i}$$
  
  On the other hand, in  characteristic $0$, by \cite[Lemma 2.6]{dm}, we have the equality
    $$\lct (\tze; \wa, \taa^e)=\inf _m\frac{\codim (\bigcap_i\cont^{\geq m_i}(\taa_i)\cap \pi^{-1}(\tze), \wa_\infty)}{\sum_im_ie_i},$$
    which yields (\ref{ineq-of-lct}).
   
\end{proof}


\section{Construction of a  lifting of a sequence in positive characteristic to that in characteristic zero }

\vskip.5truecm 
\noindent
Let $k$ be an algebraically closed field of characteristic $p>0$.
In this section we construct a lifting of a given blow-up sequence by closed points over $(\AA_k^2, 0)$ to such a sequence over $\AA_\CC^2$.
%
\begin{rem}[see for example {\cite{kssw}}]\label{graded}
 Let $A_n\to A_{n-1}\to\cdots A_1\to A$ be a sequence of blow-ups with the center at  subschemes 
 over the affine space $A=\AA_k^N$.
      Then, each $A_i \ (i>0)$ is described as $A=\spec R_0$, $  A_i=\proj R_i$, where $R_i$ is a graded ring 
      with its degree 0 part $R_0=k[x_1,\ldots, x_N]$ as follows:
  \begin{equation}\label{proj} 
      R_i=\oplus _{\ell\geq0} R_0[I_1 T_1,\ldots, I_{i}T_{i}]_{(\ell g_1,\ldots,\ell g_{i-1}, \ell g_i)}, 
\end{equation}
$$    \ \ \ \mbox{for} \ \
     g_1\gg\cdots \gg g_{i}\gg ..\gg g_n,$$
   where $x_\iota$ and $ T_j$ are indeterminates for all $\iota$ and $j$, and 
   ${I}_1\subset R_0,\ldots, I_i\subset R_{i-1},\ldots, I _n\subset R_{n-1}$ are the homogeneous  ideals 
   defining the center of each blow-up.
   We define the multi-degree of an element  $r\cdot T_1^{e_1}\cdots T_i^{e_i}\in R_i$  $( r\in R_0)$ as follows:
    $$\deg r \cdot T_1^{e_1}\cdots T_i^{e_i}=(e_1,\cdots, e_i).$$  
   
   By this degree, the ring $ R_0[I_1 T_1,\ldots, I_{i}T_{i}]$ becomes a multi-graded ring 
   and $ R_0[I_1 T_1,\ldots, I_{i}T_{i}]_{(\ell g_1,\ldots,\ell g_{i-1}, \ell g_i)}$ is its homogeneous part
   of degree $(\ell g_1,\ldots,\ell g_{i-1}, \ell g_i)$.
   Then, 
   $R_i$ is a ``diagonal'' sub-graded ring of $ R_{0}[I_1 T_1,\ldots, I_{i}T_{i}]$.
   From now on, we regard $R_i$ as a graded ring with respect to $\ell\in \ZZ_{\geq0}$
   and a homogeneous ideal of $R_i$ is in this grading.
\end{rem}

\begin{rem}
   By the construction, we can see that for every $i=0,1,\ldots,n$, $R_i$ is a k-subalgebra of a polynomial ring
   $$R_i\subset k[x_1,\ldots, x_N, T_1,\ldots, T_i].$$
   We will construct a skeleton 
   $$S_i\subset \Sigma[x_1,\ldots, x_N, T_1,\ldots, T_i]$$
   of $R_i$
   for an appropriately chosen skeleton $\Sigma\subset k$. 
   By this, we can apply the statements of Lemma \ref{ext-of-field} to our situation.
   Here, note that the skeletons of this form have the following good properties:
 \begin{enumerate}
  \item
   $S_i  \hookrightarrow R_i$ is flat, because it is a base change of a flat extension $ \Sigma   \hookrightarrow k$;
   \item 
   By enlarging $\Sigma$ we can include any finite number of elements of $R_i$ into $S_i$.
  \end{enumerate}

   \end{rem}

\vskip.3truecm
\noindent
{\bf[Proof of Theorem \ref{newmain}]}
Write
\[
A_0=\spec R_0,\qquad R_0=k[x_1,\dots,x_N],
\]
and for $i\ge 1$ write $A_i=\proj R_i$ as in Remark~\ref{graded}, where
$I_i\subset R_{i-1}$ is the homogeneous ideal defining the center $P_i$ of
$\varphi_i$.

Choose a skeleton $\Sigma\subset k$ and a compatible skeleton
$\widetilde\Sigma\subset \CC$, and set
\[
S_0=\Sigma[x_1,\dots,x_N],\qquad
\widetilde S_0=\widetilde\Sigma[x_1,\dots,x_N].
\]
Let
\[
B_0:=\spec S_0,\qquad \wB_0:=\spec \widetilde S_0.
\]
More generally, once $S_i$ and $\widetilde S_i$ are constructed, we write
\[
B_i:=\proj S_i,\qquad \wB_i:=\proj \widetilde S_i.
\]
For notational simplicity, whenever $E_j\subset A_j$ denotes the exceptional
divisor of $\varphi_j$, we also denote by the same symbol $E_j\subset B_j$ the
corresponding divisor on the skeleton $B_j$. Likewise, the corresponding divisor
on $\wB_j$, and later its base change to $\wa_j$, will also be
denoted by $\we_j$.

We construct $S_i$, $\widetilde S_i$, and prime liftings $\tP_i$
inductively so that the incidence relations---that is, the relations
\[
P_i\in E_j \qquad (j<i)
\]
between the centers and the strict transform of the earlier exceptional divisors---are preserved.

For $i=1$, the center is the origin, so
\[
I_1=(x_1,\dots,x_N)\subset S_0,
\qquad
\widetilde I_1=(x_1,\dots,x_N)\subset \widetilde S_0.
\]
Let $S_1$ and $\widetilde S_1$ be obtained from $S_0$ and $\widetilde S_0$ by using $I_1$ and $\wI_1$ 
exactly as $R_1$ is obtained from $R_0$  in Remark~4.1, and set
\[
B_1:=\proj S_1,\qquad \wB_1:=\proj \widetilde S_1.
\]
Then $\wB_1\pmod p=B_1$, and $\we_1$ is a prime lifting of
$E_1$.

Now assume that $S_{i-1}$, $\widetilde S_{i-1}$, and
\[
\tP_1,\dots,\tP_{i-1}
\]
have already been constructed. Let
\[
J:=I\!\left(\bigcap_{P_i\in E_j}E_j\right)\subset R_{i-1},
\]
where $I(\bigcap_{P_i\in E_j}E_j)$ is the homogeneous ideal in $R_{i-1}$ defining $\bigcap_{P_i\in E_j}E_j$.
After enlarging $\Sigma$ if necessary, we may assume that both $J$ and $I_i$
are generated by elements of $S_{i-1}$. By the inductive hypothesis, the ideal
\[
\widetilde J:=I\!\left(\bigcap_{P_i\in E_j}\we_j\right)
\subset \widetilde S_{i-1}
\]
is a prime lifting of $J$. 
Here, as $P_i\in A_{i-1}$ is regular, the defining ideal $I_i\subset R_{i-1}$ is a regular prime ideal.
On the other hand, 
$$S_{i-1}\hookrightarrow k[S_{i-1}]=R_{i-1}$$ 
is flat, it follows that $I_i\cap S_{i-1}$ is a regular prime ideal.
We denote the point in $B_{i-1}$ corresponding to $P_i\in A_{i-1}$ by the same symbol $P_i$.
Restricting to an affine neighbourhood of $P_i$, the
maximal ideal of $P_i\in A_{i-1}$ is generated by $N$ elements 
of $(S_{i-1})_{(\ell_{i-1})}$ for some homogeneous element $\ell_{i-1}\in S_{i-1}$.
Since $P_i\in B_{i-1}$ is
a regular point, these generators form a regular system of parameters.  
Hence, we can apply Proposition \ref{lift-of-maximal-ideal}
and Corollary~\ref{inclusions} to obtain a prime lifting
\[
\tP_i\in \wB_{i-1}
\]
of $P_i$ whose defining ideal contains $\widetilde J$. Therefore
\[
\tP_i\in \we_j
\iff
P_i\in E_j
\qquad (j<i),
\]
so the incidence relations are preserved.

Let $\widetilde I_i\subset \widetilde S_{i-1}$ be the homogeneous prime ideal
defining $\tP_i$. Define $S_i$ and $\widetilde S_i$ from $S_0$ and
$\widetilde S_0$ by adjoining
\[
I_1T_1,\dots,I_iT_i
\qquad\text{and}\qquad
\widetilde I_1T_1,\dots,\widetilde I_iT_i,
\]
respectively, exactly as in Remark~\ref{graded}, and set
\[
B_i:=\proj S_i,\qquad \wB_i:=\proj \widetilde S_i.
\]
Then $\widetilde I_i$ is a prime lifting of $I_i$, and
\[
\wB_i\pmod p=B_i.
\]

Proceeding inductively, we obtain the schemes $B_i$, $\wB_i$, and the
points $\tP_i$ for all $i$. Since $\widetilde I_i$ is a prime lifting of
$I_i$, Lemma~\ref{ext-of-field}, (ii)  gives
\[
\operatorname{ht} I_i
=
\operatorname{ht} \widetilde I_i
=
\operatorname{ht}\bigl(\widetilde I_i\otimes_{\widetilde\Sigma}\CC\bigr),
\]
where the second equality uses the fact that
$\widetilde I_i\cap \widetilde\Sigma=\{0\}$.

Now set
\[
\wa_i:=\wB_i\otimes_{\widetilde\Sigma}\CC.
\]
Then $\wa_i\pmod p=A_i$ for all $i$, proving {\rm (i)}.

To prove {\rm (ii)}, fix $i$. Since $A_{i-1}$ is obtained from $\AA_k^N$ by
successive blow-ups at closed points, the point $P_i$ lies on an affine chart
isomorphic to $\AA_k^N$. After choosing suitable local coordinates on such
a chart, we may regard $P_i$ as the origin; that is, there exists an affine
neighbourhood
\[
U_{i-1}\simeq \spec k[u_1^{(i)},\dots,u_N^{(i)}]
\]
such that
\[
P_i=(u_1^{(i)},\dots,u_N^{(i)}).
\]
After enlarging $\Sigma$ finitely many times so that the coefficients involved in
this coordinate description belong to $\Sigma$, we obtain compatible affine charts
\[
U_{i-1,\Sigma}\simeq \spec \Sigma[u_1^{(i)},\dots,u_N^{(i)}],
\qquad
\widetilde U_{i-1}\simeq
\spec \widetilde\Sigma[u_1^{(i)},\dots,u_N^{(i)}],
\]
with
\[
P_i=(u_1^{(i)},\dots,u_N^{(i)}),
\qquad
\tP_i=(u_1^{(i)},\dots,u_N^{(i)}).
\]
Thus $\tP_i$ is a section over $\spec \widetilde\Sigma$. The blow-up of
$\widetilde U_{i-1}$ along this section has exceptional divisor
$\mathbb P^{N-1}_{\widetilde\Sigma}$, and after base change to $\CC$ this
becomes $\mathbb P^{N-1}_{\CC}$. In particular, the exceptional divisor is
irreducible, and the base change of
\[
\wB_i\to \wB_{i-1}
\]
to $\CC$ is the blow-up of the closed point
\[
\tP_i\in \wa_{i-1},
\]
where we regard $\tP_i=(u_1^{(i)},\dots,u_N^{(i)})\subset  \CC[u_1^{(i)},\dots,u_N^{(i)}]$
as a closed point of 
$ \wa_{i-1}$.
This proves {\rm (ii)}.

Next, by abuse of notation we denote by $\we_i\subset \wa_i$
the base change $\we_i\otimes_\wsig\CC\subset \wa_i$
of the divisor $\we_i\subset \wB_i$. 
The discrepancy of the exceptional divisor created at the $i$-th blow-up is given by
\[
k_{E_i}
=
\height I_i -1 + \sum_{P_i\in E_j} k_{E_j},
\]
and
\[
k_{\we_i}
=
\operatorname{ht}\bigl(\widetilde I_i\otimes_{\widetilde\Sigma}\CC\bigr)-1
+\sum_{\tP_i\in \we_j} k_{\we_j}.
\]
Since the heights agree and the incidence relations are preserved, induction on
$i$ yields
\[
k_{\we_i}=k_{E_i}
\qquad\text{for all }i.
\]
This proves {\rm (iii)}.

To prove (iv), let $N=2$.
For a given $f\in k[x_1,x_2]$, we may assume that $f\in S_0=\Sigma[x_1,x_2]$ by
enlarging $\Sigma$.
By the above argument we have sequences of compatible skeletons:
\[
B_n \xrightarrow{\varphi_n} B_{n-1} \to \cdots \to B_1 \xrightarrow{\varphi_1} B_0=\spec S_0,
\]
\[
\widetilde{B}_n \xrightarrow{\wv_n} \widetilde{B}_{n-1}
\to \cdots \to \widetilde{B}_1 \xrightarrow{\wv_1} \widetilde{B}_0=\spec \widetilde{S}_0,
\]
where
\[
S_0=\Sigma[x_1,x_2], \qquad \widetilde{S}_0=\widetilde{\Sigma}[x_1,x_2],
\]
and where \(E_i \subset B_i\) and \(\we_i \subset \widetilde{B}_i\) denote the
corresponding exceptional divisors. Let
\[
\varphi : B_n \to B_0, \qquad \wv : \wB_n \to \wB_0
\]
be the composites of all the blow-ups.

Let \(X \subset B_0\) be the divisor defined by \(f=0\). Write
\[
\operatorname{div}_{B_n}(f)=X_n+\sum_{i=1}^n a_iE_i,
\]
where
\[
a_i=v_{E_i}(f),
\]
and \(X_n\) contains no component \(E_i\). 
Here, by abuse of notation, we denote the strict transform of $E_i \subset B_i$ on $B_n$ by $E_i$, and that of $\we_i \subset \wB_i$ on $\wB_n$ by $\we_i$.

Denote the total transforms of
\[
(\we_i \subset \wB_i) \quad \text{and} \quad (E_i \subset B_i)
\]
by
\[
(\we_i^{\,*} \subset \widetilde{B}_n) \quad \text{and} \quad (E_i^{*} \subset B_n),
\]
respectively. We also have the expression:
\[
\operatorname{div}_{B_n}(f)=X_n+\sum_{i=1}^n b_iE^*_i,
\]
with
\[
b_i=\mult_{P_i} X_{i-1},
\]
where $X_{i-1}\subset B_{i-1}$ is the strict transform of $X\subset B_0$.

Set
\[
\L:=\mathcal{O}_{B_n}\!\left(-\sum_{i=1}^n b_iE^*_i\right), \qquad
\wL:=\mathcal{O}_{\wB_n}\!\left(-\sum_{i=1}^n b_i\we^*_i\right).
\]
%
Since $\we_i\subset \wB_i$ is a lifting of $E_i\subset B_i$,
each \(\we^*_i\subset \wB_n\) is a lifting of \(E^*_i\subset B_n\), we have
\[
\wL\otimes_{\mathbb{Z}}\mathbb{Z}/(p)\cong \L.
\]

By construction, \(\L\) is nef on \(B_n \otimes_{\Sigma} Q(\Sigma)\). On the other
hand,  the combinatorial structures (weighted dual graphs) of
 \(\{E_i\}\) and \(\{\we_i\}\)
coincide. Hence, the first equality in the following induces the second equality: 
\[
\sum_{i=1}^n a_iE_i=\sum_{i=1}^n b_iE_i^{*},
\qquad
\sum_{i=1}^n a_i\we_i=\sum_{i=1}^n b_i\we_i^{\,*}.
\]

Since the fibers of
\[
B_n \otimes_{\Sigma} Q(\Sigma) \to B_0 \otimes_{\Sigma} Q(\Sigma)
\]
have dimension at most one, and since
\[
R^1\varphi_*\mathcal{O}_{B_n \otimes_{\Sigma} Q(\Sigma)}=0,
\]
we can apply Lipman's vanishing theorem (\cite[Theorem 12.1, (ii)]{l}) to obtain
\[
R^1\varphi_*\!\left(\L \otimes_{\Sigma} Q(\Sigma)\right)=0,
\]
Since \(\Sigma \hookrightarrow Q(\Sigma)\) and
\(\widetilde{\Sigma} \hookrightarrow Q(\widetilde{\Sigma})\) are flat, Hartshorne
III, Proposition 9.3 gives
\[
(R^1\varphi_*\L)\otimes_{\Sigma} Q(\Sigma)=0,
\]

Now let
\[
\pi : B_0=\spec S_0 \to \spec \Sigma,
\qquad
\widetilde{\pi} : \widetilde{B}_0=\spec \widetilde{S}_0 \to \spec \widetilde{\Sigma}
\]
be the structure morphisms. 
Then, 
$$
\pi_*((R^1\varphi_*\L)\otimes_{\Sigma} Q(\Sigma))=0.
$$
Applying the above argument once more and Leray spectral sequence,  we obtain
\begin{equation}\label{local-vanish}
(R^1(\pi \circ \varphi)_*\L)\otimes_{\Sigma} Q(\Sigma)=0.
\end{equation}

We next prove the following claim.

\medskip
\noindent
{\bf Claim.}
\(R^1(\widetilde{\pi}\circ\wv)_*\wL=0\) in a neighborhood of the point
\(p_{\widetilde{\Sigma}} \in \spec \widetilde{\Sigma}\) lying over \(p \in \spec \mathbb{Z}\).

\medskip
\noindent
Assume the claim for the moment. Applying \((\widetilde{\pi}\circ\wv)_*\) to
\[
0 \to \wL \xrightarrow{\times p} \wL \to \L \to 0,
\]
the vanishing in the claim yields a surjection
\[
(\widetilde{\pi}\circ\wv)_*\wL
\longrightarrow
(\pi\circ\varphi)_*\L.
\]
Since \(f \in (\pi\circ\varphi)_*\L \subset \Sigma[x_1,x_2]\), we may choose a lift
\[
\widetilde{f} \in (\widetilde{\pi}\circ\wv)_*\wL=(\widetilde{\pi}\circ\wv)_*\oo_{\wB_n}\left(-\sum a_i\we_i\right)
\subset \widetilde{\Sigma}[x_1,x_2].
\]
Then
\[
v_{\we_i}(\widetilde{f}) \ge a_i
\qquad (i=1,\dots,n).
\]
Conversely, since each \(\we_i\) is a prime lifting of \(E_i\), reduction
modulo \(p\) gives
\[
v_{E_i}(f) \ge v_{\we_i}(\widetilde{f})
\qquad (i=1,\dots,n).
\]
Because \(a_i=v_{E_i}(f)\), we conclude that
\[
v_{E_i}(f)=v_{\we_i}(\widetilde{f})
\qquad (i=1,\dots,n).
\]

It remains to prove the claim. The long exact sequence associated to
\[
0 \to \wL \xrightarrow{\times p} \wL \to L \to 0
\]
contains
\[
\to R^1(\widetilde{\pi}\circ\wv)_*\wL
\xrightarrow{\times p}
R^1(\widetilde{\pi}\circ\wv)_*\wL
\to R^1(\pi\circ\varphi)_*\L \to \cdots .
\]
Hence we obtain an inclusion from the cokernel of the map  $\times p$
\[
\bigl(R^1(\widetilde{\pi}\circ\wv)_*\wL\bigr)
\otimes_{\widetilde{\Sigma}} \Sigma
\hookrightarrow
R^1(\pi\circ\varphi)_*\L.
\]
Tensoring with \(Q(\Sigma)\) over \(\Sigma\), and using the flatness of
\(\Sigma \hookrightarrow Q(\Sigma)\), we get
\[
\bigl(R^1(\widetilde{\pi}\circ\wv)_*\wL\bigr)
\otimes_{\widetilde{\Sigma}} Q(\Sigma)
\hookrightarrow
\bigl(R^1(\pi\circ\varphi)_*\L\bigr)\otimes_{\Sigma} Q(\Sigma).
\]
Since the right-hand side vanishes by (\ref{local-vanish}), we obtain
\[
\bigl(R^1(\widetilde{\pi}\circ\wv)_*\wL\bigr)
\otimes_{\widetilde{\Sigma}} Q(\Sigma)=0.
\]

To see that \(R^1(\widetilde{\pi}\circ\wv)_*\wL\) is coherent on
\(\spec \widetilde{\Sigma}\), note first that
\[
\wv : \widetilde{B}_n \to \widetilde{B}_0
\]
is projective, so \(R^1\wv_*\wL\) is coherent on
\(\widetilde{B}_0=\spec \widetilde{S}_0\). Its support is contained in
\(Z(x_1,x_2)\), hence for some \(m \gg 0\) it is annihilated by \((x_1,x_2)^m\).
Therefore it may be regarded as a coherent sheaf on
\[
\spec \widetilde{S}_0/(x_1,x_2)^m.
\]
Let
\[
\rho : \spec \widetilde{S}_0/(x_1,x_2)^m \to \spec \widetilde{\Sigma}
\]
be the induced finite morphism. Then
\[
\rho_*\bigl(R^1\wv_*\wL\bigr)
\]
is coherent on \(\spec \widetilde{\Sigma}\), and the Leray spectral sequence identifies
it with
\[
R^1(\widetilde{\pi}\circ\wv)_*\wL.
\]
Thus \(R^1(\widetilde{\pi}\circ\wv)_*\wL\) is indeed coherent on
\(\spec \widetilde{\Sigma}\).

Since its localization at \(p_{\widetilde{\Sigma}}\) becomes zero after tensoring with the
fraction field, Nakayama's lemma implies that
\[
R^1(\widetilde{\pi}\circ\wv)_*\wL=0
\]
in a neighborhood of \(p_{\widetilde{\Sigma}}\). This proves the claim, and hence the equality in (iv) of the
theorem. $\Box$



      Here, we note that the point $\tP_i\in \wa_{i-1}$ is smooth,
       as the local ring $\oo_{\wa_{i-1},\tP_i }$ has the maximal ideal generated by $N$ elements, where
       $N$ is the height of the defining ideal of $P_i$ which is equal to that of $\tP_i$.
       
       As is described in the general situation of Lemma \ref{ext-of-field}, (iv), the extension of $\a_K$ for a prime ideal $\a$ in the skeleton $S$ 
       is not necessarily prime, {\it i.e.,} geometrically, $\we_i\otimes \CC$ is not necessarily irreducible.
       However, in the special case where each blow-up center is a section over $\spec \wsig$,  the extension of an exceptional divisor is unique.
     

 
\begin{rem}
Under the notation of Theorem \ref{newmain}, the reader may expect that for $N\geq 3$ and an element $f\in k[x_1,\ldots, x_N]$,
there exists a lifting $\tf\in \CC[x_1,\ldots, x_N]$ such that 
\begin{equation}\label{=holds}
v_{E_i}(f)=v_{\we_i}(\tf).
\end{equation}

In some simple cases, namely when $i = 1,2$, we can verify this statement (see Proposition~\ref{baby} below).
However, for $N \geq 3$ and $i \geq 3$, there exists a counterexample constructed by Koll\'ar (cf. \cite{ko}).

For $N \geq 3$, the condition~(\ref{=holds}) appears to be too strong for our purposes,
since the counterexample does not affect the set of log canonical thresholds ($\lct$s).
This observation suggests that the statement should be reformulated in the case $N \geq 3$.
This will be studied in our forthcoming paper.

\end{rem}

\begin{prop}\label{baby}

Assume the notation of Theorem~1.1, and set $A:=\mathbb{A}_k^N$.

\begin{enumerate}
\item
Let $\varphi_1 \colon A_1 \to A$ be the blow-up of the origin $0 \in A$, and
let $E_1 \subset A_1$ be its exceptional divisor.
Then, for every nonzero polynomial $f\in  k[x_1,\dots,x_N]$,
there exist a lifting $\we_1$ of $E_1$ and a lifting
$\tf\in\CC[x_1,\dots,x_N]$
of $f$ such that
\[
v_{E_1}(f) = v_{\we_1}(\tf).
\]

\item
Let $\varphi_2 \colon A_2 \to A_1$ be the blow-up of a $k$-valued closed point
\[
P_2 \in E_1 \subset A_1,
\]
and let $E_2 \subset A_2$ be its exceptional divisor.
Then, for every nonzero polynomial $f\in  k[x_1,\dots,x_N]$,
there exist a lifting $\we_2$ of $E_2$ and a lifting
$\tf\in\CC[x_1,\dots,x_N]$
of $f$ such that
\[
v_{E_i}(f) = v_{\we_i}(\tf)\qquad \mbox{for}\qquad i=1,2
\]
\end{enumerate}

\end{prop}

\begin{proof} For (i), take appropriate compatible skeletons $\Sigma$ and $\wsig$ so that $f\in \Sigma[x_1,\ldots, x_N]$.
For  $$f=\sum_{i\in \ZZ^N_{\geq 0}} a_i{\bold{x}}^i\ \ \  ,\ \ \mbox{take}\ \ \ \tf=\sum_{i\in \ZZ^N_{\geq 0}} \ta_i\bold{x}^i,$$
where $\ta_i\in \wsig$ is a lifting of $a_i\in \Sigma$ such that 
 $\ta_i=0$ if $ a_i=0$. 
Then, we can see that $\ord_{\bold x} f=\ord_{\bold x} \tf$, which implies 
$$v_{E_1}(f)=v_{\we_1}(\tf).$$

For (ii),  we may  assume that locally the following hold as $P_2$ is a $k$-valued point:
$$(B_{1}, P_{2})=\left(\spec \Sigma \left[x_1,\frac{x_2}{x_1},\ldots, \frac{x_N}{x_1}\right], \left(x_1,\frac{x_2}{x_1},\ldots, \frac{x_N}{x_1}\right)\right)$$
and
  $$(\wB_{1}, \tP_{2})=\left(\spec \wsig \left[x_1,\frac{x_2}{x_1},\ldots, \frac{x_N}{x_1}\right], \left(x_1,\frac{x_2}{x_1},\ldots, \frac{x_N}{x_1}\right)\right),$$
  for an appropriate compatible skeletons $\Sigma\subset k$ and $\wsig\subset\CC$.
 
  For $f\in \Sigma[x_1,\ldots, x_N]$, 
    if $\ord_{\bold x}f=m_1$, then $f$ factors as  follows:
  $$f=x_1^{m_1} g \left(x_1,\frac{x_2}{x_1},\ldots, \frac{x_N}{x_1}\right)\ \mbox{with}\ \  x_1\not| g$$
  on $B_{1}$.
  If $$g=\sum_e a_e x_1^{e_1}\left( \frac{x_2}{x_1}\right)^{e_2}\ldots  \left(\frac{x_N}{x_1}\right)^{e_N},$$  where  $a_e\in \Sigma$
  and $e=(e_1,\ldots, e_N)$,
  define $$\tg =\sum_e \ta_e x_1^{e_1}\left( \frac{x_2}{x_1}\right)^{e_2}\ldots  \left(\frac{x_N}{x_1}\right)^{e_N},$$
  by using a lifting $\ta_e\in \wsig$ of $a_e$ 
  such that $\ta_e=0$ if $a_e=0$.
  Then, $\mult_{P_{2}}g=\mult_{\tP_{2}}\tg$, which we denote by $m_2$.
  Therefore, the lifting
 $\tf:=x_1^{m_1}\tg$ of $f$ satisfies
  $$v_{E_{2}}(f)=m_1+m_2=v_{\we_{2}}(\tf),$$
  which yields (ii).
  
\end{proof}

\section{Proofs of Applications}


\medskip
\noindent
{\bf [Proof of Corollary \ref{main}]}

\noindent 
 Let $A_n\to A_{n-1}\to\cdots \to A_1\to A_0=\AA_k^2$ be the sequence of blow-ups yielding the given prime divisor $E$
 as the last exceptional divisor $E_n$.

Since the ideal $\a \subset k[x_1,x_2]$ has finitely many generators and only finitely many coefficients in $k$, by enlarging $\Sigma$ we may assume that $S_0$ contains all these generators. 
For all generators, there exist liftings satisfying Theorem~1.1(ii). Let these lifted polynomials generate an ideal
\[
\taa\subset \CC[x_1,x_2].
\]
Then
\[
v_E(\a)=v_{\we}(\taa),
\]
where $\we$ is a lifting of $E=E_n$ constructed in Theorem~1.1. By this and Theorem~1.1(i), assertions (i) and (ii) of Corollary~\ref{main} follow. For (iii), it suffices to include all coefficients of the generators of $\a_1,\ldots,\a_r$ in $\Sigma$. Then, for every $i$, we have
\[
v_E(\a_i)=v_{\we}(\taa_i),
\]
which proves (iii).
  $\Box$


\vskip.3truecm
\noindent
Until now, we assume the base space $A$ to be the affine space.
 However, 
we can apply the theorem also to a pair consisting of  a smooth  variety and a multi-ideal with a real exponent.
In such a case, we cannot define a lifting of a regular function around the point $0$,
but passing through the completion we can reduce the discussion to the case of the affine space as follows:

\vskip.3truecm
\noindent
    {\bf[Proof of Corollary \ref{equal-p-0}]}

\noindent
 Let $\widehat{A}:=\spec \widehat\oo_{A,0}$ and $\widehat0\in \widehat{A}$ the closed point.
    Then, the set of prime divisors over $(A,0)$ coincides with the set of  prime divisors over $(\widehat{A},\widehat0)$.
    On the other hand,  the ideals $\a_i\subset \oo_A$ $(i=1,\ldots,r)$ give the extensions    $\widehat\a_i\subset \widehat\oo_{A,0}=k[[x_1,x_2]]$ and satisfy the following:
    $$v_E(\a^e)=v_E(\widehat\a^e)\ \ \ \mbox{for\ any \ prime\ divisor\ } E \  \mbox{over} \ (A,0)\ \mbox{and}$$ 
\begin{equation}\label{lct-of-ext-1}    
    \lct (0;A, \a^e)=\lct (\widehat0; \widehat{A}, \widehat\a^e).
\end{equation}    
    Then, for a given prime divisor $E$ over $(A,0)$ there exists $d\in\NN$ such that 
    $v_E(\widehat\a_i)<v_E(\widehat\m^d)$ for all $i$, where $\widehat\m$ is the maximal ideal of $\widehat\oo_{A,0}
    =k[[x_1,x_2]]$.
    Here, noting that $\widehat\a_i+\widehat\m^d$ is the extension of an ideal $ \underline{\a}_{id}\subset k[x_1,x_2]$,
    we obtain
    $$v_E(\widehat\a^e)=v_E((\widehat\a+\widehat\m^d)^e)=v_E(\underline{\a}_{d}^e) \ \mbox{and}$$
 \begin{equation}\label{lct-of-ext}   
    \lct(0;\AA_k^N, \underline{\a}_{d}^e) \geq \lct (\widehat0; \widehat{A}, \widehat\a^e),
\end{equation}    
    where the notation $(\widehat\a+\widehat\m^d)^e$ means the formal product
    $$\prod_{i=1}^r(\widehat\a_i+\widehat\m^d)^{e_i}.$$
    Here, 
    the above inequality (\ref{lct-of-ext}) of lcts follows from $\underline{\a}_{id}k[[x_1,\ldots, x_N]]= \widehat\a_i+\widehat\m^d 
    \supset \widehat\a_i$.
    Now, by (\ref{lct-of-ext}) we have $(\AA_k^2, \underline\a_d^e)$ is log canonical, and for $E$ we have
     $a(E; A, \a^e)=a(E;\widehat{A}, (\widehat\a+\widehat\m^d)^e)=a(E; \AA_k^N, \underline{\a}_{d}^e)$.
     Therefore the log discrepancy   $a(E; A, \a^e)$   belongs to the set
 $$\Lambda_{(\AA_k^N,0),e}:= \left\{a(E;\AA_k^2,\a^e)  \ \middle | \begin{array}{l}\ (\AA_k^2, \a^e)\ \mbox{is\ log\ canonical\ at $0$}, 
         \\ E:\ \mbox{\ any \ prime divisor\ over } (\AA_k^2,0) \end{array}\right\} $$     
     Now, we can  reduce  the discussion to the case
    over $(\AA_k^N, 0)$.
      By  Corollary \ref{main}, (iii), for a fixed exponent $e\in \RR_{>0}^r$, 
     we have the inclusion:
     $$\Lambda_{(\AA_k^2,0),e}\subset \Lambda_{(\AA_\CC^2,0),e},$$
     where, we use that a lifting $\taa^e$ of log canonical multi-ideal $\a^e$ is also log canonical
     by Lemma \ref{inequality of mld}.
     As the right set is discrete in the sense of Corollary \ref{equal-p-0} due to \cite{kawk2}, the left set  $\Lambda_{(\AA_k^N,0),e}$ is also discrete.
     $\Box$   
  
 \vskip.3truecm
 \noindent
 {\bf[Proof of Corollary \ref{mld-contain}]}
 
 \noindent
 For containment of mlds, it is sufficient to show 
  for any multi-ideal $\a^e$ on a smooth surface $A$ defined over $k$, there exists a multi-ideal $\taa^e$ on $\AA_\CC^2$ such that 
    $$\mld(0;A, \a^e)=\mld(0; \AA_\CC^2,\taa^e).$$
  
      As in the proof of Corollary \ref{equal-p-0}, we may assume that 
$(A,0)=(\AA_k^2,0)$.
Let $E$ be a prime divisor over $(\AA_k^2,0)$ computing 
$\mld(0;\AA_k^2,\a^e)$.
Then, by Corollary \ref{main}, there exist  liftings $\taa^e$ of $\a^e$
and  $\we$ of $E$ such that
\[
a(E;\AA_k^2,\a^e)=a(\we;\AA_{\CC}^2,\taa^e),
\]
which yields the following inequality:
\begin{equation}\label{computing}
\mld(0;\AA_k^2,\a^e)
= a(E;\AA_k^2,\a^e)
= a(\we;\AA_{\CC}^2,\taa^e)
\ge \mld(0;\AA_{\CC}^2,\taa^e).
\end{equation}

On the other hand, by Lemma \ref{inequality of mld}, we obtain
\[
\mld(0;\AA_k^2,\a^e)\le \mld(0;\AA_{\CC}^2,\taa^e),
\]
which implies that the inequality in (\ref{computing}) is in fact an equality. 

For containment of lcts, first in case that $\lct(0; \AA_k^2, \a^e)$ is computed by a prime divisor over $(\AA_k^2, 0)$,
we can prove that 
$$\lct(0; \AA_k^2, \a^e)=\lct(0;\AA_\CC^2,\taa^e),$$
for an appropriate lifting $\taa^e$ of $\a^e$, in the same way as for mlds.
If $c:=\lct(0; \AA_k^2, \a^e)$ is not computed by a prime divisor over $(\AA_k^2, 0)$, 
then it is computed by a curve on $\AA_k^2$.
Then, $$c=\lct(0;\AA_k^1, \b^e)$$
for some multi-ideal $\b^e$ on $\AA_k^1$.
It is known that the set of lcts of multi-ideal with the exponent $e$ on a smooth curve
does not depend on the choice of the base field.
Therefore, $$c=\lct(0;\AA_\CC^1, \tb^e)$$
for some multi-ideal $\tb^e$ on $\AA_\CC^1$.
By \cite{dm}, $c$ becomes also the lct of a multi-ideal with the exponent $e$ on $\AA_\CC^2$.
%
%
$\Box$

\vskip.3truecm
\noindent
{\bf[Proof of Corollary \ref{campillo}]} 
  Let 
$$A_n\stackrel{\varphi_n}\longrightarrow A_{n-1} \to
       \cdots A_1 \stackrel{\varphi_1}\longrightarrow A$$
  be the sequence of blow-ups at closed points used to obtain an embedded resolution of $(\AA_k^2, C)$ around $0\in \AA_k^2$.
Let  $$\wa_n\stackrel{\widetilde\varphi_n}\longrightarrow \wa_{n-1} \to
       \cdots \wa_1 \stackrel{\widetilde\varphi_1}\longrightarrow \wa=\AA_\CC^2,$$
be its lifting constructed in the proof of Theorem \ref{newmain}.
 Then, by construction, the proximity relations among the exceptional curves over $\AA_k^2$ and those over $\AA_\CC^2$ coincide.
Moreover, the corresponding multiplicity systems coincide at each step of the resolution process.
$\Box$

\vskip1truecm

\noindent Shihoko Ishii, \\ Graduate School of Mathematical Science, University of Tokyo,\\
3-8-1, Komaba. Meguro, Tokyo, Japan.\\
shihoko@g.ecc.u-tokyo.ac.jp

\end{document}